\DeclareMathOperator{\Ext}{Ext}
\newcommand{\Z}{\mathbb Z}
\newcommand{\Q}{\mathbb Q}
\newcommand{\F}{\mathbb F}
\newcommand{\G}{\mathbb G}
\newcommand{\toda}[1]{\langle #1\rangle}
\newtheorem{thm}{Theorem}[section]
\newtheorem{lem}[thm]{Lemma}
\newtheorem{cor}[thm]{Corollary}
\newtheorem{prop}[thm]{Proposition}
\newtheorem*{remark}{Remark}
\title{The $5$-local Homotopy of $eo_4$}
\author{Michael A.~Hill}
\begin{document}
\bibliographystyle{amsplain}

\begin{abstract}
We compute the $5$-local cohomology of a particular Hopf algebroid related to algebraic topology and algebraic geometry. This computation arises in a number of settings, and it is a $5$-local analogue of the Weierstrass Hopf algebroid used to compute $tmf$ homology. We finish the paper by computing Adams-Novikov differentials in the cohomology, giving the homotopy, $V(0)$-homology, and $V(1)$-homology of the putative spectrum $eo_4$.
\end{abstract}

\maketitle

\section{Introduction}\label{sec:Intro}
We present a computation of the cohomology of the Gorbounov-Hopkins-Mahowald Hopf algebroid at the prime $5$. This Hopf algebroid is given by
\[
(A,\Gamma)=(\Z_p[a_1,\dots,a_p], A[r]),
\]
where $|a_i|=2i(p-1)$ and $|r|=2(p-1)$. The left unit is given by the canonical inclusion, and the right unit is given by the formula
\[
\eta_R(a_i)=\sum_{j=0}^i\binom{p-j}{i-j} a_{j}r^{i-j}.
\]
The element $r$ is primitive.

The origin of the formulas for the right units and coproduct on $r$ is geometric. The Hopf algebroid $(A,\Gamma)$ corepresents curves of the form
\begin{equation}\label{eqn:DefEqn}
y^{p-1}=x^p+a_1x^{p-1}+a_2x^{p-2}+\dots+a_p,
\end{equation}
together with coordinate transformations $x\mapsto x+r$ (we recover the grading if we further consider a scaling action $(x,y)\mapsto(\lambda^{p-1}x,\lambda^p y)$). The coefficients $a_i$ can then be interpreted as functions on the moduli stack of all such curves. The elements of $A$ that are invariant under translation are the analogue of modular forms in this context, and one of the goals of this paper is to compute the full ring of invariants for this Hopf algebroid using the full cohomology ring.

There is a distinguished invariant class in dimension $2p(p-1)^2$. Associated to any curve of the form (\ref{eqn:DefEqn}) is the discriminant $\Delta$, a polynomial in the coefficients which is invariant under coordinate transformations. For $p=2$, this is the classical formula $a_1^2-4a_2$, and for $p=3$, this is the modular form of the same name.

The Gorbounov-Hopkins-Mahowald Hopf algebroid is a generalization to all primes of the Weierstrass Hopf algebroid used to compute the Adams-Novikov $E_2$-term for the $3$-local homotopy of $tmf$ \cite{HopkinsMahowaldEO2, Tilmantmf}. Building on this example, these Hopf algebroids were introduced to better make computations of the homotopy of the higher real $K$-theories $EO_{p-1}$ at $p$ \cite{GorMah}.

Hopkins and Miller defined spectra $EO_{p-1}$ as $E_{p-1}^{hG}$, where $E_{p-1}$ is the Lubin-Tate spectrum of height $(p-1)$ and where $G$ is a maximal finite subgroup of the Morava stabilizer group $\G_{p-1}$ \cite{HennFiniteKnResolutions}. When $p=2$, this gives the $2$-completion of $KO$, and when $p=3$, this gives the $K(2)$-localization of $TMF$, and in these two cases, there are connective models: $ko$ and $tmf$. These spectra $K(p-1)$-localize to $EO_{p-1}$, and their Adams-Novikov $E_2$ term is given by $\Ext$ over $(A,\Gamma)$. For primes bigger than $3$, there is still no construction of an analogous connective model $eo_{p-1}$. Just as for $p=2$ or $3$, we should have that $L_{K(p-1)}(eo_{p-1})=EO_{p-1}$, and the Adams-Novikov $E_2$ term for $eo_{p-1}$ should be given by $\Ext_{(A,\Gamma)}(A,A)$.

For the remainder of this paper, we will assume the existence of such a spectrum, assuming moreover that it is a ring spectrum and an f.p. spectrum in the sense of Mahowald-Rezk \cite{MahowaldRezkFP}. In \S~\ref{sec:EOp-1Intro}, we will ground these computations, showing that the algebraic computations determine completely $\pi_\ast EO_4$, even if an $eo_4$ of the desired form does not exist.

\subsection{Organization}
In \S\ref{sec:HopfAlg}, we provide the computation of the rational cohomology of the Hopf algebroid (giving, in particular, the homotopy of the rationalization of $eo_{p-1}$), and we state our main results. In \S\ref{sec:AllPrimes} through \S\ref{sec:eo4}, we run the Bockstein spectral sequences needed to compute the cohomology. In \S\ref{sec:AdamsDiff}, we run the Adams-Novikov spectral sequences for $eo_4$, $eo_4\wedge V(0)$, and $eo_4\wedge V(1)$. Finally, in \S\ref{sec:EOp-1Intro}, we describe more completely the geometric model that ties together the algebraic topology with the algebraic geometry initially considered by Hopkins-Gorbounov-Mahowald.

\section{Rational Computations and Main Results}\label{sec:HopfAlg}

\subsection{Rational Information}
The rational cohomology is easy to compute.

\begin{lem}
There are invariant classes $c_i$ of degree $2i(p-1)$ in $A$ such that
\[
H^*(A\otimes\Q,\Gamma\otimes\Q)=H^0(A\otimes\Q,\Gamma\otimes\Q)=\Q[c_2,\dots,c_p].
\]
\end{lem}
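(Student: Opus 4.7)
The plan is geometric. Rationally, the translation $x\mapsto x+r$ shifts $a_1$ by $pr$ (from $\eta_R(a_1)=a_1+pr$), and since $p$ is invertible in $\Q$ this action is free. So every curve in the family normalizes uniquely to a depressed form with $a_1=0$, and the moduli stack becomes an affine scheme $\mathrm{Spec}\,\Q[c_2,\ldots,c_p]$, where the $c_i$ are the coefficients of the curve after completing the $p$-th root. This already predicts that $H^*$ concentrates in degree $0$ and equals that polynomial ring.

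To turn this into algebra I would define $c_i\in A\otimes\Q$ by formally substituting $r=-a_1/p$ into the formula for $\eta_R(a_i)$:
\[
c_i=\sum_{j=0}^i \binom{p-j}{i-j}a_j(-a_1/p)^{i-j}.
\]
Then $|c_i|=2i(p-1)$, $c_1=0$, and $c_i\equiv a_i\pmod{a_1}$ for $i\ge 2$. Invariance $\eta_R(c_i)=c_i$ follows from the cocycle identity built into $\eta_R$: replacing each $a_j$ by $\eta_R(a_j)$ and $a_1$ by $\eta_R(a_1)=a_1+pr$ in the defining sum reproduces the same polynomial, since that sum is the closed form for a composition of two translations. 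The congruence modulo $a_1$ then gives a change of generators $A\otimes\Q=\Q[a_1,c_2,\ldots,c_p]$, on which the right unit acts by $a_1\mapsto a_1+pr$ and fixes the $c_i$. Taking invariants immediately yields $H^0=\Q[c_2,\ldots,c_p]$.

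For the vanishing of higher cohomology, I would package this change of coordinates as a map of Hopf algebroids from the trivial one $(\Q[c_2,\ldots,c_p],\Q[c_2,\ldots,c_p])$ into $(A\otimes\Q,\Gamma\otimes\Q)$ and argue that it is a weak equivalence of Hopf algebroids: the stack associated to the right side is the quotient of $\mathrm{Spec}(A\otimes\Q)$ by the free $\G_a$-shift on $a_1$, and this quotient is precisely $\mathrm{Spec}\,\Q[c_2,\ldots,c_p]$. Invoking Morita invariance of Hopf-algebroid cohomology (in the form of Hovey's theorem, or the Miller--Ravenel change-of-rings theorem) then transports the trivial Ext across. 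The main obstacle is just making that equivalence fully rigorous; the cleaner alternative is a direct cobar argument, using the rational splitting $r=(\eta_R(a_1)-a_1)/p$ to build an explicit contracting chain homotopy above degree zero. The latter is more elementary but depends crucially on inverting $p$, which is exactly the crutch that will be removed in the $5$-local work that follows.
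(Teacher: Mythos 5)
Your proposal is correct and follows essentially the same route as the paper: normalize the curve by the rational translation $x\mapsto x-a_1/p$, define the $c_i$ by substituting $r=-a_1/p$ into $\eta_R(a_i)$, observe the resulting change of generators $A\otimes\Q=\Q[a_1,c_2,\dots,c_p]$ with the groupoid acting only by the free shift $a_1\mapsto a_1+pr$, and conclude that the Hopf algebroid is equivalent to the trivial one on $\Q[c_2,\dots,c_p]$, killing all higher cohomology. The one point you omit is that the lemma asserts the $c_i$ lie in $A$ itself, whereas your $c_i$ carry $p$-power denominators; the paper closes this by rescaling, replacing $c_i$ with $p^ic_i$ (explicitly $c_i=\sum_{j=0}^{i}\binom{p-j}{i-j}(-1)^ja_ja_1^{i-j}p^j$), which is needed later since these integral representatives seed the $5$-local generators of $H^0(A,\Gamma)$.
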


\begin{proof}
Since $p$ is a unit, we can transform Equation~\ref{eqn:DefEqn}
into one of the form
\[
y^{p-1}=x^p+a_2x^{p-2}+\dots+a_p
\]
by applying the morphism $x\mapsto x-\tfrac{a_1}{p}$. There are no translations in $x$ which preserve this form of the curve, so we conclude that $(A\otimes\Q,\Gamma\otimes\Q)$ is equivalent to the trivial sub-Hopf algebroid $(B,\Xi)$, where $B=\Xi=\mathbb Q[c_2',\dots,c_p']$ and $c_i'=\eta_R(a_i)$ evaluated at our choice of $r$. However, since all of the structure maps are the identity, $(B,\Xi)$ has no higher cohomology, and $H^0$ is just $B$. In particular, we conclude
\[
H^\ast(A,\Gamma)\cong H^\ast(B,\Xi)=B.
\]

It remains only to show that we can find classes $c_i$ in $A$ which also rationally generate the ring of invariants. The denominators of the elements $c_i'$ are powers of $p$, so we can multiply by a sufficiently high power of $p$ to get new generators that actually lie in $A$:
\[
c_i=\sum_{j=0}^{i}\binom{p-j}{i-j}(-1)^ja_{j}a_1^{i-j}p^{j}.\qedhere
\]
\end{proof}

\begin{cor}
As a ring,
\[
\pi_\ast(eo_{p-1})_\Q=\Q[c_2,\dots,c_p]
\]
\end{cor}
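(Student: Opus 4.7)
The plan is to extract $\pi_\ast(eo_{p-1})_\Q$ from the rationalized Adams--Novikov spectral sequence for $eo_{p-1}$. Under the standing assumption of the paper, this spectral sequence has $E_2$ page $\Ext_{(A,\Gamma)}(A,A)$; since $(A,\Gamma)$ is flat and finitely generated, I would first note that tensoring with $\Q$ commutes with $\Ext$, so the rationalized $E_2$ page is $H^\ast(A\otimes\Q,\Gamma\otimes\Q)$.

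Then I would apply the preceding lemma to identify this rationalized $E_2$ page with $\Q[c_2,\dots,c_p]$, concentrated entirely in cohomological filtration zero. Every Adams--Novikov differential $d_r$ raises filtration by $r\geq 2$, so its target lies in positive filtration where $E_2$ vanishes; all differentials therefore vanish for purely dimensional reasons, and $E_\infty=E_2=\Q[c_2,\dots,c_p]$. Since $E_\infty$ is concentrated in a single filtration, the associated graded of $\pi_\ast(eo_{p-1})_\Q$ has at most one non-trivial piece in each total degree, leaving no room for additive or multiplicative extensions; the ring structure on the invariants therefore gives the asserted ring structure on homotopy.

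The real content of this corollary is already in the lemma, so I would not expect any conceptual obstacle. The only technical points to verify are that the rational Adams--Novikov spectral sequence converges strongly to $\pi_\ast(eo_{p-1})_\Q$ (which follows from the f.p. ring spectrum hypothesis already in force), and that rationalization commutes with $\Ext$ over $(A,\Gamma)$ (immediate from flatness). Neither is a genuine obstacle; the hard work has been absorbed into the lemma.
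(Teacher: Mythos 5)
Your proposal is correct and is precisely the argument the paper leaves implicit: the corollary is stated without proof because, as you observe, the rationalized Adams--Novikov $E_2$-term is the lemma's $\Q[c_2,\dots,c_p]$ concentrated in filtration zero, forcing collapse with no extension problems. Your added remarks on strong convergence and on rationalization commuting with $\Ext$ (by exactness of localization applied to the cobar complex) correctly fill in the only technical points.
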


\subsection{Main Results}
The elements $c_i$ are rationally polynomial generators. In our $p$-local context, this means their products can be written as some power of $p$ times a sum of integral generators. To find the generators of $H^0(A,\Gamma)$, we have to add these and the obvious relations. The proof of the following theorem is our main goal.

\begin{thm}\label{Thm:MainThm}
As an algebra over $\Z_{(5)},$
\[
H^0(A,\Gamma)=\Z_{(5)}[c_2,c_3,\Delta_i,\Delta'_{15},\Delta'_{18},\Delta]/\big(\text{rels}\big),
\]
where $i$ ranges from $4$ to $22$, excluding $20$, where the degree of $\Delta_i$ and $\Delta'_i$ is $8i$, where $\Delta$ is the aforementioned discriminant, and where the expressions of these elements in terms of the elements $a_i$ and their relations are induced by the formulas from Table~\ref{Tab:GenRel}, together with the natural inclusion of $H^0(A,\Gamma)$ into $A$.
\end{thm}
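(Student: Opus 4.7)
The plan is to read $H^0(A,\Gamma)$ off the $5$-primary Bockstein spectral sequence that will be developed in Sections~\ref{sec:AllPrimes}--\ref{sec:eo4}. The previous lemma supplies the rational answer $H^0(A,\Gamma)\otimes\Q=\Q[c_2,c_3,c_4,c_5]$, so $H^0(A,\Gamma)$ sits as a $\Z_{(5)}$-lattice inside this polynomial ring. Computing it therefore amounts to determining which $\Z_{(5)}[1/5]$-combinations of the $c_i$ remain integral---that is, land in $\Z_{(5)}[a_1,\dots,a_5]$---after expansion via the explicit formula $c_i=\sum_{j}\binom{p-j}{i-j}(-1)^j a_j a_1^{i-j}p^{j}$ from the lemma.

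First, I would reduce modulo $5$ and compute $H^\ast(A/5,\Gamma/5)$, a substantially more tractable Hopf algebroid cohomology because many of the binomial coefficients appearing in $\eta_R$ simplify at the prime $5$. Then I would run the $v_0$-Bockstein spectral sequence converging to the $5$-local $H^\ast(A,\Gamma)$, whose differentials $d_r$ detect $5^r$-divisibility. Restricting to cohomological degree zero gives $H^0(A,\Gamma)$ as a graded abelian group, and the generators listed in the theorem---$c_2$, $c_3$, the classes $\Delta_i$ for $i$ running from $4$ to $22$ and skipping $20$ (replaced at $i=20$ by the discriminant $\Delta$ in degree $160=2p(p-1)^2$), together with the two primed classes $\Delta'_{15}$ and $\Delta'_{18}$ in degrees $120$ and $144$---correspond one-to-one with the surviving classes on the bottom row of the $E_\infty$-page.

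Each such surviving class must then be identified with a specific $\Z_{(5)}$-integral invariant, which is the purpose of Table~\ref{Tab:GenRel}: for every generator one exhibits an explicit polynomial in the $a_j$'s of $5$-adic valuation zero whose reduction mod $5$ represents the corresponding Bockstein class. The appearance of the two primed classes $\Delta'_{15},\Delta'_{18}$ suggests places where the Bockstein filtration has two survivors in the same internal degree, and the omission of $\Delta_{20}$ reflects that its would-be degree is occupied by $\Delta$ itself.

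The main obstacle is organizational rather than conceptual: verifying the complete list of relations. For each pair of generators one expands the product in the $a_j$'s and rewrites it as a $\Z_{(5)}$-linear combination of monomials in the listed generators, then confirms by comparing Poincar\'e series against the Bockstein output that no further generators or relations have been missed. The uniform divisibility of all internal degrees by $8$ and the finite list of generators keeps this bookkeeping feasible, though it is the step most prone to sign and coefficient errors and therefore most in need of the explicit table.
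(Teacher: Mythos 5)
Your proposal is correct and follows essentially the paper's own route: the paper likewise reads $H^0(A,\Gamma)$ off the bottom row of the $E_\infty$-page of the final $5$-adic Bockstein spectral sequence (run from $H^*(A/I_0,\Gamma/I_0)$, which is itself assembled through the chain of $a_k$-Bocksteins in \S\ref{sec:AllPrimes}--\S\ref{sec:V0eo4} that you defer to), and then matches each surviving leading term of the form $a_1^\epsilon a_2^i a_3^j a_4^k$ with an explicit integral invariant via the $5$-adic depth bookkeeping of \S\ref{sec:relations} and Table~\ref{Tab:GenRel}, with the relations disposed of by direct algebraic manipulation. Your framing of $H^0$ as a $\Z_{(5)}$-lattice in $\Q[c_2,\dots,c_5]$ with Bockstein differentials detecting $5$-divisibility, and your Poincar\'e-series completeness check, are mild repackagings of the same argument rather than a different method.
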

Due to its length, Table~\ref{Tab:GenRel} is postponed until \S\ref{sec:relations}.

This ring has a distinguished ideal:
\[
\mathfrak m=(5,c_2,c_3,\Delta_i,\Delta_j').
\]
We shall see that this ideal also annihilates all of the higher cohomology.

\section{Preliminary, Prime Independent Remarks}\label{sec:AllPrimes}
\subsection{Reduction to a Simpler Hopf Algebroid}
The method of faithfully flat descent allows us to reduce to an equivalent but slightly smaller Hopf algebroid \cite{HoveyHopfAlgebroids}. The geometric underpinning is fairly simple. Up to a faithfully flat base change, any curve of the form
\[
y^{p-1}=x^p+a_1x^{p-1}+\dots+a_{p-1}x+a_p
\]
can be brought into the form
\[
y^{p-1}=x^p+a_1 x^{p-1}+\dots+a_{p-1}x
\]
by simply adding a root of the right-hand side and translating that root to zero. This condition places a strong restriction on the possible coordinate changes allowed, as now we can only translate by roots of the right hand side. In other words, up to faithfully flat base change, the curve and coordinate changes are corepresented by
\[
(\bar{A},\bar{\Gamma})=(\Z_p[a_1,\dots,a_{p-1}],\bar{A}[r]/r^p+a_1r^{p-1}+\dots+a_{p-1}r).
\]
We remark that $\bar{\Gamma}$ is the quotient of $\Gamma$ by the ideal generated by $a_p$ and $\eta_R(a_p)$. Since $(A,\Gamma)$ and $(\bar{A},\bar{\Gamma})$ flat-locally corepresent the same object, the underlying stacks are equivalent. This in turn implies that the Hopf algebroids have equivalent cohomologies. This is formally identical to the argument presented by Bauer for the cohomology of the Weierstrass Hopf algebroid at the prime $3$ \cite{Tilmantmf}.

\subsection{Invariant Ideals}

We will compute the Adams-Novikov spectral sequence via a sequence of Bockstein spectral sequences. It is clear from the formulation of the right units that the chain of ideals
\[
I_0=(p)\subset I_1=(p,a_1)\subset\dots\subset I_{p-1}=(p,a_1\dots,a_{p-1})
\]
is invariant. The quotients $(A/I_k,\Gamma/I_k)$ are therefore Hopf algebroids, and we can compute using a Miller-Novikov style algebraic Bockstein spectral sequence \cite{GreenBook}. If we filter by powers of these invariant ideals, we get spectral sequences of the form
\[
H^*(A/I_k,\Gamma/I_k)\otimes\Z_{(p)}[a_{k-1}]\Rightarrow H^*(A/I_{k-1},\Gamma/I_{k-1}).
\]
This is a trigraded spectral sequence of algebras. If the degree of a homogeneous element $x$ is written $(s,t,u)$, where $s$ is the cohomological degree, $t$ is the internal dimension, and $u$ is the Bockstein degree, then the degree
of $d_r(x)$ is $(s+1,t,u+r)$.

The first two Bockstein spectral sequences are the same for all primes greater than $2$.

\subsection{Computation of $H^*(A/I_{p-1},\Gamma/I_{p-1})$}
The Hopf algebroid $(A/I_{p-1},\Gamma/I_{p-1})$ is the Hopf algebra
\[
(\F_p,\F_p[r]/r^p).
\]
The cohomology of this is $\F_p[b]\otimes E(a)$, where $|a|=\big(1,2(p-1)\big)$, $|b|=\big(2,2p(p-1)\big)$, and in the cohomology of the bar complex,
\[
a=[r],\quad b=\toda{\underbrace{a,\dots,a}_p}=\sum_{i=1}^{p-1}\frac{1}{p}\binom{p}{j}[r^{p-i}|r^i].
\]
Since $r$ is primitive in $\Gamma$, we know that $a$ and $b$ represent classes in $H^*(A,\Gamma)$ and hence survive all of the algebraic spectral sequences. Topologically, $a$ represents the class $\alpha_1=h_{1,0}$, while $b$ represents $\beta_1=b_{1,0}$.

\subsection{Computation of $H^*(A/I_{p-2},\Gamma/I_{p-2})$}

We run the Bockstein spectral sequence for adding in $a_{p-1}$. The $E_1$-term is an exterior algebra on an element $a$ tensored with a polynomial algebra on elements $b$ and $a_{p-1}$. The tridegrees of these elements are
\[
|a|=\big(1,2(p-1),0\big),\,|b|=\big(2,2p(p-1),0\big),\, |a_{p-1}|=\big(0,2(p-1)^2,1\big).
\]
For dimension reasons, all of these are permanent cycles, so the spectral sequence collapses. Since the $E_\infty$-term is the free graded commutative algebra on $a$, $b$, and $a_{p-1}$, there are no hidden extensions.

\subsection{Computation of $H^*(A/I_{p-3},\Gamma/I_{p-3})$}

The $E_1$-term of this Bockstein spectral sequence is a polynomial algebra on the elements from the previous part, together with $a_{p-2}$. The tridegrees of the elements $a$ and $b$ are not changed, while the rest are:
\[
|a_{p-1}|=\big(0,2(p-1)^2,0\big),\, |a_{p-2}|=\big(0,2(p-1)(p-2),1\big).
\]
The formula for the right unit modulo $I_{p-3}$ shows that $a_{p-2}$ is a permanent cycle. It also shows that
\[
d_1(a_{p-1})=2aa_{p-2}.
\]

This leaves us the following algebra for the $E_2$-page:
\[
\left(\F_p[b,a_{p-1}^p,a_{p-2}]\otimes E(a)/aa_{p-2}\right)\{1,x_{1},\dots,x_{(p-1)}\}/(ax_k,a_{p-2}x_k),
\]
where $x_{k}$ has tridegree $(1,2(1+k(p-1))(p-1),0)$ and is represented by $aa_{p-1}^{k}.$

\begin{prop}
With the exception of $x_{p-1}$, all of the $x_{k}$ are non-bounding permanent cycles. We have $d_{p-1}(x_{p-1})\doteq a_{p-2}^{p-1}b.$
\end{prop}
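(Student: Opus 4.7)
The plan is to handle the two assertions in sequence: permanence and non-bounding of $x_1,\dots,x_{p-2}$ by dimensional analysis, then non-vanishing of the coefficient of $a_{p-2}^{p-1}b$ in $d_{p-1}(x_{p-1})$ by an explicit cobar computation.

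First I would carry out a dimension count to enumerate all possible targets of a differential on any $x_k$. Such a target lives in cohomological degree $2$, internal degree $2(p-1)\bigl(1+k(p-1)\bigr)$, and Bockstein filtration $r\ge 1$. Because of the relations $ax_k=0=a_{p-2}x_k$ on $E_2$ together with $a^2=0$, the cohomological-degree-$2$ part of $E_2$ at filtration $r$ consists exactly of classes $c\cdot b\cdot a_{p-2}^{r}\cdot a_{p-1}^{p\ell}$ with $c\in\F_p$ and $\ell\ge 0$. Matching internal degrees collapses to the Diophantine equation
\[
(k-1)(p-1)=(p-2)r+p(p-1)\ell.
\]
Since $\gcd(p-1,p-2)=1$ and $\ell\ge 1$ already forces $k\ge p+1$, the unique solution in the range $1\le k\le p-1$, $r\ge 1$ is $(k,r,\ell)=(p-1,p-1,0)$. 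This simultaneously rules out any nontrivial $d_r(x_k)$ for $k<p-1$ and constrains $d_{p-1}(x_{p-1})$ to be a scalar multiple of $a_{p-2}^{p-1}b$.

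For non-bounding, every $x_k$ lies at Bockstein filtration $u=0$; a differential hitting it would have to originate in filtration $-r\le -1$, which does not occur, so none of the $x_k$ is a boundary.

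Finally, to show the coefficient in $d_{p-1}(x_{p-1})\doteq c\,a_{p-2}^{p-1}b$ is nonzero, I would lift $x_{p-1}$ to the cobar $1$-cochain $\tilde x_{p-1}=a_{p-1}^{p-1}[r]$ in the cobar complex of $(\bar A/I_{p-3},\bar\Gamma/I_{p-3})$ and use the congruence $\binom{p-1}{j}\equiv(-1)^j\pmod p$ to expand
\[
d(\tilde x_{p-1})=\bigl(\eta_R(a_{p-1}^{p-1})-a_{p-1}^{p-1}\bigr)[r]=\sum_{j=1}^{p-1}(-2)^{j}\,a_{p-1}^{p-1-j}\,a_{p-2}^{j}\,[r^{j}\,|\,r].
\]
Each summand with $j<p-1$ sits in an internal degree where $H^{2}(\F_p[r]/r^p)$ vanishes, so is a coboundary; it can be eliminated by adding to $\tilde x_{p-1}$ a correction of the form $\mu_j a_{p-1}^{p-1-j}a_{p-2}^{j}[r^{j+1}]$, where the scalars $\mu_j$ are forced recursively by the identity $d([r^{j+1}])=-\sum_{k=1}^{j}\binom{j+1}{k}[r^{k}\,|\,r^{j+1-k}]$ together with the boundary contributions from $d(a_{p-1}^{p-1-j}a_{p-2}^{j})$, which only add to higher filtrations. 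After all corrections, the surviving term sits at filtration $p-1$ in an internal degree where $H^{2}(\F_p[r]/r^p)$ is one-dimensional, spanned by $b$. The main obstacle is verifying that the cumulative scalar does not vanish mod $p$; the cleanest route is to recognize that the chain of corrections is precisely the defining system for the $p$-fold Massey product $\toda{a,a,\dots,a}=b$, while the external factor $a_{p-2}^{p-1}$ arises from the $p-1$ iterated applications of $\eta_R-1$ to $a_{p-1}$, each contributing one copy of $2a_{p-2}r$.
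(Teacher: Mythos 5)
Your proposal is correct, and for the key differential it is ultimately the same mechanism as the paper's proof, but unpacked at the chain level rather than cited. Where the paper disposes of $x_k$, $k<p-1$, with the words ``for dimension reasons,'' you make that precise: your enumeration of the cohomological-degree-$2$ part of $E_2$ is right (the relations $ax_k=a_{p-2}x_k=0$ and $a^2=0$ do leave only the classes $b\,a_{p-2}^r a_{p-1}^{p\ell}$), and your Diophantine equation $(k-1)(p-1)=(p-2)r+p(p-1)\ell$ is the correct degree bookkeeping; in fact $\ell\ge 1$ forces $k\ge p+2$, slightly stronger than your stated bound, but either suffices, and the filtration argument for non-bounding is exactly right. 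For $x_{p-1}$, the paper invokes a Toda-style argument (May's convergence theorem, via \cite{GreenBook}), writing $d_{p-1}(x_{p-1})\doteq\toda{a,d_1(a_{p-1}),\dots,d_1(a_{p-1})}\doteq\toda{a,\dots,a}a_{p-2}^{p-1}=ba_{p-2}^{p-1}$; your filtered-cobar recursion is precisely the defining system that this theorem packages, so when you close the scalar question by recognizing the corrections as a defining system for the $p$-fold bracket $\toda{a,\dots,a}=b$, you have reproduced the paper's argument in explicit coordinates. One small imprecision: for $j\ge 2$ the individual summands $[r^j|r]$ are \emph{not} cocycles, so ``is a coboundary'' is loose; what is true, and what your recursion actually uses, is that after the corrections through filtration $m-1$, the filtration-$m$ leading term is a weight-$(m+1)$ $2$-cocycle in the associated graded $\F_p[r]/r^p$, hence a multiple of $d[r^{m+1}]$, since $[r^{m+1}]$ is the only $1$-cochain of that weight and $H^2$ vanishes there. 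Finally, note that your route can be made fully self-contained, avoiding even the Massey-product citation: the consistency of your recursion is automatic from $d^2=0$, it determines
\[
\mu_m\equiv\frac{(-2)^m}{m+1}\pmod p\qquad(1\le m\le p-2),
\]
and the surviving filtration-$(p-1)$ term has coefficient $\mu_{p-1-k}2^k\equiv(-1)^{k+1}/k$ on $[r^k|r^{p-k}]$ (using $2^{p-1}\equiv 1$), which by $\tfrac1p\binom{p}{k}\equiv(-1)^{k-1}/k$ is on the nose the cocycle $a_{p-2}^{p-1}b$, confirming the nonzero scalar you were worried about.
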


\begin{proof}
For dimension reasons, the classes $x_k$ for $k<p-1$ are non-bounding permanent cycles.

The differential on $x_{p-1}$ follows from a Toda style argument \cite{GreenBook}:
\[
d_{p-1}(x_{p-1})\doteq\toda{a,\underbrace{d_1(a_{p-1}),\dots,d_1(a_{p-1})}_{p-1}}\doteq\toda{\underbrace{a,\dots,a}_{p}}a_{p-2}^{p-1}=ba_{p-2}^{p-1}.\qedhere
\]
\end{proof}

This gives the following $E_p$-page, which, for degree reasons, is also the $E_\infty$-page:
\[
\left(\F_p[b,a_{p-2},a_{p-1}^p]\otimes E(a)/(aa_{p-2},a_{p-2}^{p-1}b)\right)\{1,x_1,\dots,x_{p-2}\}/(ax_k,a_{p-2}x_k).
\]
There are also the following Massey product relations:
\[
\toda{x_{k},a,a_{p-2}}=x_{k+1}=\toda{a_{p-2}^{k+1},\underbrace{a,\dots,a}_{k+2}}.
\]
These in turn give multiplicative extensions between the elements $x_i$:
\[
x_ix_j=\begin{cases}
          a_{p-2}^{p-2}b & i+j=p-2 \\
      0 & \text{otherwise,}
         \end{cases}
\]
where $x_0=a$, by standard shuffling results \cite{MayMatricMassey}.

Since a curve of the form $y^{p-1}=x^p-a_{p-1}x$ is non-singular if and only if $a_{p-1}$ is a unit, we see that modulo $I_{p-3}$, the discriminant $\Delta$ is represented by $a_{p-1}^p$. We also already know this is an invariant class, so we know that $\Delta$ survives all of the Bockstein spectral sequences.

The class $x_1$ is also represented in the bar complex by $-r^p$. In particular, as long as we are working modulo $p$, this class is a cycle. This means it survives all of the algebraic spectral sequences except the last. Since $x_1$ bounds $pb$, topologically it represents the class $h_{1,1}$, a null-homotopy of $p\beta_1$.

We represent this $E_\infty$-page for the prime $5$ as Figure~\ref{fig:a3Einfty}. In this and in all following figures, the horizontal axis represents $t/8$ and the vertical axis represents $s$. This picture is repeated polynomially in $\Delta$, represented by a box, and also polynomially in $b$, so we will only list the first part.

\begin{figure}[ht]
\includegraphics{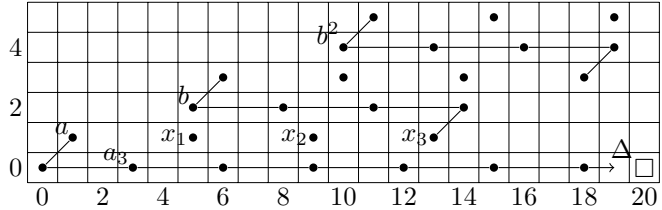}
\caption{$H^*(A/I_{p-3},\Gamma/I_{p-3})$} \label{fig:a3Einfty}
\end{figure}

In the picture, a solid line of positive slope is multiplication by $a$, and one of slope zero is multiplication by $a_3$. The case of the general prime is similar, except that the horizontal axis would be indexed as $t/2(p-1)$, and each row above the zeroth would have $(p-1)$ solid dots.

From this point on, we will restrict our attention to the prime $5$. In this case, we can find explicit representatives of the elements $x_{1},$ $x_{2}$, and $x_{3}$:
\[
x_1=a_4r+a_3r^2,\, x_{2}=a_4^2r+2a_3a_4r^2+3a_3^2r^3,\, x_{3}=a_4^3r+3a_3a_4^2r^2-a_3^2a_4r^3-3a_3^3r^4.
\]

\section{Computation of $H^*(A/I_1,\Gamma/I_1)$ and $V(1)_\ast eo_4$}
The element $a_1$ represents $v_1$, so the cohomology ring $H^\ast(A/I_1,\Gamma/I_1)$ is the Adams-Novikov $E_2$-page for the $V(1)$-homology of $eo_4$. Since $V(1)$ is a ring spectrum at the prime $5$, the Adams-Novikov spectral sequence for $V(1)_\ast eo_4$ will be a spectral sequence of algebras.

\subsection{Interlude: Notation}

For clarity, we will rely on pictures of the Bockstein $E_r$ terms to describe the initial situations and tell us which elements could support a differential.  In these Bockstein spectral sequences, the $d_r$-differential on any class must be divisible by the new, Bockstein element to the $r^{th}$ power. If we make the convention that a solid horizontal line means multiplication by the Bockstein element and an open circle means a polynomial algebra on this element, then we see that the possible targets of a $d_r$ differential are open circles preceded horizontally by $r$ solid lines. If we additionally make the convention that circles with dots in them are the non-Bockstein multiplicative generators, then the differentials are totally determined by their values on these elements. These conventions will allow us to immediately see which elements could support a differential.

\subsection{The $d_1$-differential}

We have a single differential coming immediately from the bar
complex:
\[
d_1(a_3)=3a_2a.
\]
Since $a_3a=0$, all elements of the form $a_3^k$ for $k>1$ are $d_1$-cycles.

\begin{figure}[ht]
\includegraphics{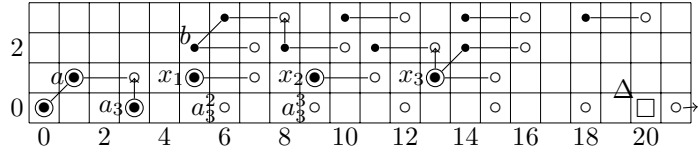}
\caption{$E_1$-page for $H^*(A/I_1,\Gamma/I_1)$} \label{fig:a2E1}
\end{figure}

There is one other $d_1$-differential:

\begin{prop}
We have $d_1(x_3)\doteq a_2a_3^2b.$
\end{prop}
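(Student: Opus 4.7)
The plan is to exploit the Massey product description $x_3=\langle a_3^3,a,a,a,a\rangle$ (with four copies of $a$) established at the end of the previous section, and apply the Leibniz rule for $d_1$ acting on Massey products. This product is defined, since $a\cdot a_3=0$ in $H^\ast(A/I_2,\Gamma/I_2)$ (a consequence of the relation $d_1(a_4)\doteq a_3 a$ in the $a_3$-Bockstein) and $a\cdot a=0$ for degree reasons. The only input from the bar complex is that $r$ is primitive, giving $d_1(a)=d_1(b)=0$, together with $d_1(a_3)=3a_2a$, read off directly from the right unit formula modulo $I_1$.

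Since $d_1$ annihilates each of the four $a$ entries, the standard derivation formula for a first-page Bockstein differential acting on a Massey product \cite{MayMatricMassey} collapses to
\[
d_1(x_3) \doteq \langle d_1(a_3^3),\,a,a,a,a\rangle.
\]
A direct computation gives $d_1(a_3^3)=3a_3^2\,d_1(a_3)=9a_2a_3^2a$. One checks that this new Massey product is defined: $d_1(a_3^3)\cdot a$ contains a factor $a^2$, which vanishes, and the successive products $a\cdot a$ likewise vanish. Because $a_2a_3^2$ is an even-degree class that commutes past everything relevant, the shuffling identities of \cite{MayMatricMassey} permit it to be pulled out of the bracket:
\[
\langle 9a_2a_3^2a,\,a,a,a,a\rangle = 9\,a_2a_3^2\,\langle a,a,a,a,a\rangle = 9\,a_2a_3^2\,b \doteq a_2a_3^2 b,
\]
using $9\not\equiv 0\pmod 5$ and the identification $b=\langle a,a,a,a,a\rangle$ from \S\ref{sec:AllPrimes}.

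The main technical worry is the indeterminacy of the Massey products on either side of the Leibniz formula: one must verify that no indeterminacy can alter the conclusion up to a unit. In the relevant tridegree, any contribution to the indeterminacy must factor through elements of the form $a\cdot(\text{class})$ or $a_3^3\cdot(\text{class})$ in the preceding Bockstein $E_\infty$-page, and a direct inspection of Figure~\ref{fig:a2E1} shows no such class lives in the correct tridegree to compete with $a_2a_3^2b$. An independent sanity check is available via the explicit representative $x_3=a_4^3r+3a_3a_4^2r^2-a_3^2a_4r^3-3a_3^3r^4$ in the bar complex: applying the bar differential and reducing modulo $(5,a_1,a_2^2)$ isolates the $a_2$-linear piece, which the Massey product argument simply repackages in closed form.
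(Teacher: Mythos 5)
Your proposal is correct and follows essentially the same route as the paper: it uses the Massey product presentation $x_3=\toda{a_3^3,a,a,a,a}$, applies May's derivation formula for $d_1$ on Massey products, and shuffles $a_2a_3^2$ out of the bracket to identify the target with $a_2a_3^2b$ up to a unit (your $9\equiv -1 \pmod 5$ matches the paper's $-a_2a_3^2a$). The only difference is that you explicitly verify definedness and indeterminacy of the brackets, details the paper leaves implicit.
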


\begin{proof}
The element $x_3$ can be written as
\[
x_3=\toda{a_3^3,a,a,a,a}.
\]
From this it follows from a simplification of May's work on Massey products, as presented in \cite{GreenBook}, that
\[
d_1(x_3)=\toda{d_1(a_3^3),a,a,a,a}=\toda{-a_2a_3^2a,a,a,a,a}=-a_2a_3^2b.\qedhere
\]\end{proof}

\subsubsection{The $d_2$-differential}
From the picture of the $E_2$-page (Figure \ref{fig:a2E2}), we see immediately that the only elements that can support a $d_2$-differential are $a_3^3$ and $x_2$.
\begin{figure}[ht]
\includegraphics{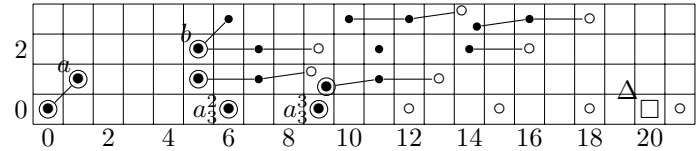}
\caption{The $E_2$-page for $H^*(A/I_1,\Gamma/I_1)$}
\label{fig:a2E2}
\end{figure}

\begin{prop}
$d_2(a_3^3)=-a_2^2x_1$ and $d_2(x_2)=-a_2^2b.$
\end{prop}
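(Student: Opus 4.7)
The strategy is to combine a chain-level lifting computation for $a_3^3$ with a multiplicative argument for $x_2$. From Figure~\ref{fig:a2E2}, these are the only classes whose $a_2$-filtration allows a $d_2$, and the targets must be divisible by $a_2^2$.

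For $d_2(a_3^3)$, I will lift $a_3^3$ to a $0$-cochain in the cobar complex for $(A/I_1,\Gamma/I_1)$. Using $\eta_R(a_3)\equiv a_3+3a_2 r\pmod{5,a_1}$ and expanding gives
\[
d(a_3^3)\equiv -a_2 a_3^2 r+2a_2^2 a_3 r^2+2a_2^3 r^3 \pmod 5.
\]
The leading term $-a_2 a_3^2 r$ is a coboundary reflecting the relation $a\cdot a_3=0$ in the previous stage, so I will modify the representative to $a_3^3+3a_2 a_3 a_4$. A parallel computation yields
\[
d(a_3 a_4)=2a_3^2 r+3a_2 a_4 r-a_2 a_3 r^2-a_2^2 r^3,
\]
and combining these --- using that the correct cocycle representative of $x_1$ in $(A/I_1,\Gamma/I_1)$ is $-r^5=a_4 r+a_3 r^2+a_2 r^3$ --- the lower-$a_2$ terms cancel and the sum reduces to $d(a_3^3+3a_2 a_3 a_4)=-a_2^2 x_1$, giving $d_2(a_3^3)=-a_2^2 x_1$.

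For $d_2(x_2)$, I will exploit the multiplicative relation $x_1 x_2=a_3^3 b$ established at the end of \S\ref{sec:AllPrimes} via Massey-product shuffling. This relation persists to $E_2$, and since both $x_1$ and $b$ are permanent cycles (representing the topological classes $h_{1,1}$ and $\beta_1$), the Leibniz rule gives
\[
x_1\cdot d_2(x_2)=d_2(x_1 x_2)=d_2(a_3^3)\cdot b=-a_2^2 x_1 b.
\]
Hence $x_1\bigl(d_2(x_2)+a_2^2 b\bigr)=0$ on $E_2$. A glance at Figure~\ref{fig:a2E2} in the relevant tridegree confirms that multiplication by $x_1$ is injective there, forcing $d_2(x_2)=-a_2^2 b$.

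The main obstacle is the chain-level bookkeeping in the first step: one must track all $a_2$-filtration contributions and use the cocycle $-r^5$ for $x_1$ in $(A/I_1,\Gamma/I_1)$ rather than the representative $a_4 r+a_3 r^2$ inherited from the previous stage, which is no longer a cocycle once $a_2$ is turned on. Once $d_2(a_3^3)$ is in hand, the Leibniz step for $d_2(x_2)$ is a clean consequence modulo the injectivity check on the $E_2$-picture.
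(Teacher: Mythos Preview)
Your computation of $d_2(a_3^3)$ is exactly the paper's: the same lift $a_3^3+3a_2a_3a_4$ and the same identification of the output with $-a_2^2 x_1$ using the cocycle $x_1=a_4r+a_3r^2+a_2r^3$ in $(A/I_1,\Gamma/I_1)$.

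For $d_2(x_2)$ you take a genuinely different route. The paper simply writes down an explicit $a_2$-lift of $x_2$, namely
\[
a_4^2r+2a_3a_4r^2+3a_3^2r^3+2a_2a_4r^3+3a_2a_3r^4,
\]
and checks in the bar complex that its differential is $-a_2^2 b$. You instead leverage the multiplicative extension $x_1x_2=a_3^3b$ from \S\ref{sec:AllPrimes}, apply Leibniz using the already-established $d_2(a_3^3)=-a_2^2x_1$, and then cancel the factor of $x_1$. This is valid: the relation $x_1x_2=a_3^3b$ holds on $E_1$ in $a_2$-filtration $0$ and passes to $E_2$ since all four factors are $d_1$-cycles; and your injectivity check goes through because $E_2^{(2,72,2)}=\F_5\{a_2^2b\}$ and $a_2^2x_1b$ is neither zero nor a $d_1$-boundary (there is nothing in tridegree $(2,112,1)$ on $E_1$). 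The trade-off is that the paper's argument is a self-contained cochain calculation requiring one to guess the correct lift of $x_2$, while yours avoids that second chain-level step at the cost of invoking the Massey-product extension and a small $E_2$-bookkeeping check. Your approach also makes transparent \emph{why} the two $d_2$'s are linked, which the paper's parallel bar computations do not.
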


\begin{proof}
In the bar complex, we have
\[
a_3^3+3a_2a_3a_4\mapsto -a_2^2(a_2r^3+a_3r^2+a_4r)=-a_2^2x_1,
\]
giving the first differential, and
\[
a_4^2r+2a_3a_4r^2+3a_3^2r^3+2a_2a_4r^3+3a_2a_3r^4\mapsto -a_2^2b,
\]
giving the second.\end{proof}

\subsubsection{The $d_3$-differential and beyond}

Figure~\ref{fig:a2E3} shows a paucity of elements above the zero line. This implies that at this stage, the spectral sequence collapses.

\begin{figure}[ht]
\includegraphics{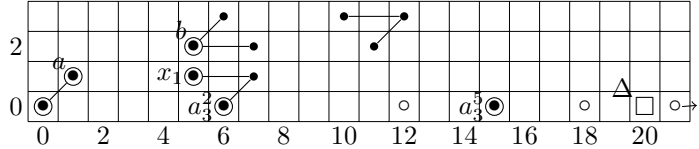}
\caption{$H^*(A/I_1,\Gamma/I_1)$}
\label{fig:a2E3}
\end{figure}

For computational reasons, we give here some of the full names for
some of the elements listed above. The elements $a$ and $b$ have
their usual bar representatives: $r$ and $\sum\tfrac{1}{p}\binom{p}{i} r^i | r^{p-i}$, while
\begin{align*}
x_1&=a_4r+a_3r^2+a_2r^3, \\
[a_3^2]&= a_3^2+2a_2a_4, \\
[a_3^5]&= a_3^5+2a_2^3a_3^3+a_2^4a_3a_4, \\
\Delta&=a_4^5-2a_3^4a_4^2-a_2a_3^2a_4^3+2a_2^2a_4^4+a_2^3a_3^2a_4^2+a_2^4a_4^3.
\end{align*}

With these elements, we can also compute the structure of $H^*$ as a
ring.

\begin{thm}\label{thm:V1eo4}
We have the multiplicative extension $2a[a_3^2]=a_2x_1,$ and the
full algebra of $H^*(A/I_1,\Gamma/I_1)$ is
\begin{multline*}
\F_5\big[a,b,x_1,a_2,[a_3^2],[a_3^5],\Delta] /\Big((a,x_1)^2,\,
a(a_2,[a_3^5]), \, a_2^2(b,x_1),\, b ([a_3^5],[a_3^2]^2),\\
([a_3^2]^5-[a_3^5]^2)-(a_2^3[a_3^2]^4+a_2^6[a_3^2]^3+2a_2^5\Delta),\,
2a[a_3^2]-a_2x_1\Big).
\end{multline*}
\end{thm}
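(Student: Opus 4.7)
The plan is to read the multiplicative generators and additive structure off the $E_\infty$-page (Figure~\ref{fig:a2E3}) and to verify the relations by a combination of $E_\infty$ bookkeeping and two short direct computations. The generators $a$, $b$, $x_1$ are inherited from $H^*(A/I_2,\Gamma/I_2)$, $a_2$ is the new Bockstein generator, and the $E_\infty$-survivors $a_3^2$, $a_3^5$, and $a_4^5\doteq\Delta$ lift to the honest invariant classes $[a_3^2]=a_3^2+2a_2a_4$, $[a_3^5]=a_3^5+2a_2^3a_3^3+a_2^4a_3a_4$, and $\Delta$ by adjusting $a_3^k$ and $a_4^5$ in lower Bockstein filtration so as to be $\eta_R$-invariant modulo $I_1$. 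The relations $(a,x_1)^2=0$, $a\cdot a_2=0$, $a\cdot[a_3^5]=0$, $a_2^2(b,x_1)=0$, and $b\cdot([a_3^5],[a_3^2]^2)=0$ already hold on $E_\infty$: the $a$-exterior relations descend from $H^*(A/I_2,\Gamma/I_2)$; $a\cdot a_2=0$ is the $d_1$-boundary of $a_3$; the $a_2^2$-relations are the $d_2$-targets of $a_3^3$ and $x_2$; and the $b\cdot(-)$ relations reduce on $E_\infty$ to $b\cdot a_3^4=0$, a generating relation from Section~\ref{sec:AllPrimes}. A bidegree check in each case shows that no class of strictly higher $a_2$-filtration occupies the relevant slot on $E_\infty$, so these remain exact relations in $H^*(A/I_1,\Gamma/I_1)$.

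Two hidden multiplicative extensions remain. For $2a[a_3^2]=a_2x_1$, both sides lie in tridegree $(1,56,0)$; on $E_\infty$, $a\cdot[a_3^2]$ maps to $a\cdot a_3^2=0$, while $a_2x_1$ is a nonzero class in strictly higher $a_2$-filtration, so a hidden extension of the form $a[a_3^2]=\lambda\cdot a_2x_1$ is permitted. The coefficient $\lambda=3$ is read off a short coboundary calculation in the cobar complex using $\eta_R(a_3)\equiv a_3+3a_2r$ and $\eta_R(a_4)\equiv a_4+2a_3r+3a_2r^2\pmod{I_1}$. The second hidden relation $[a_3^2]^5-[a_3^5]^2=a_2^3[a_3^2]^4+a_2^6[a_3^2]^3+2a_2^5\Delta$ is entirely among invariant classes, so it lives in $H^0(A/I_1,\Gamma/I_1)\subset A/I_1=\F_5[a_2,a_3,a_4]$ and reduces to a polynomial identity. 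Applying the Frobenius identity $(u+v)^5=u^5+v^5$ in characteristic five gives $[a_3^2]^5=a_3^{10}+2a_2^5a_4^5$, and $[a_3^5]^2$ expands to an explicit six-term polynomial; comparing against $a_2^3[a_3^2]^4+a_2^6[a_3^2]^3+2a_2^5\Delta$ after expanding each factor verifies the identity monomial by monomial.

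The main obstacle is this polynomial identity: while conceptually routine in characteristic five, the six-term formula for $\Delta$ must be tracked alongside $[a_3^2]^3$, $[a_3^2]^4$, and $[a_3^5]^2$, and one must verify coincidence of coefficients on roughly ten monomials of the form $a_2^k a_3^\ell a_4^m$. Everything else either descends directly from the $E_\infty$-page of Figure~\ref{fig:a2E3} or follows from the one-line cobar check for $2a[a_3^2]=a_2x_1$.
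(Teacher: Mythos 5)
Your proposal is correct and follows essentially the same route as the paper: the hidden extension $2a[a_3^2]=a_2x_1$ is verified by exhibiting the difference as a cobar coboundary (your one-line calculation with $\eta_R(a_3)\equiv a_3+3a_2r$ and $\eta_R(a_4)\equiv a_4+2a_3r+3a_2r^2$ is exactly the paper's observation that the difference is the bar differential of $a_3a_4$), and the remaining relations follow from $E_\infty$-page bookkeeping plus the direct polynomial check of $[a_3^2]^5-[a_3^5]^2=a_2^3[a_3^2]^4+a_2^6[a_3^2]^3+2a_2^5\Delta$ in $\F_5[a_2,a_3,a_4]$, which indeed verifies monomial by monomial. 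Your write-up simply supplies the details the paper's two-sentence proof leaves to ``direct computation.''
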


\begin{proof}
The first part follows from noting that the difference of these two elements is the bar differential of $a_3a_4$. The algebra structure will follow from the first part by direct computation.
\end{proof}

We also record two Massey products which we will need for the Adams-Novikov spectral sequence.

\begin{lem}\label{lem:V1MasseyProducts}
We have Massey products
\[
x_1=\toda{a_2,a,a,a}
\]
and
\[
a_2b=\toda{x_1,a,a}.
\]
\end{lem}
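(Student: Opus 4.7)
The plan is to construct defining systems in the reduced cobar complex of $(A/I_1, \Gamma/I_1)$ for both Massey products, read off the cocycle representatives, and identify them with $x_1$ and $a_2 b$. The key inputs are the primitivity of $r$, which gives $\bar\Delta(r^k) = \sum_{i=1}^{k-1}\binom{k}{i}[r^i|r^{k-i}]$, together with the cobar differentials $d(a_3) = 3 a_2[r]$ and $d(a_4) = 2 a_3[r] + 3 a_2[r^2]$ modulo $I_1$ already computed above.

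For $x_1 = \toda{a_2, a, a, a}$ I first check the four-fold product is defined: the triple sub-products $\toda{a_2, a, a}$ and $\toda{a, a, a}$ land in $H^2$-bidegrees that Theorem~\ref{thm:V1eo4} forces to be zero. I then take the defining system with $a_{12} = a_2$, $a_{23} = a_{34} = a_{45} = [r]$, $a_{13} = \tfrac{1}{3}a_3$, $a_{24} = a_{35} = \tfrac{1}{2}[r^2]$, $a_{14} = \tfrac{1}{6}a_4$, and $a_{25} = \tfrac{1}{6}[r^3]$, whose defining relations are immediate from the two displayed differentials and the $\bar\Delta$ formula. Assembling,
\[
a_{12} a_{25} + a_{13} a_{35} + a_{14} a_{45} = \tfrac{1}{6}\bigl(a_2 r^3 + a_3 r^2 + a_4 r\bigr) = \tfrac{1}{6} x_1,
\]
which equals $x_1$ modulo $5$.

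For $a_2 b = \toda{x_1, a, a}$ the triple product is defined since both $x_1 a$ and $a^2$ vanish in cohomology. The cleanest derivation uses the juggling formulas for nested Massey products from \cite{GreenBook}: having established $x_1 = \toda{a_2, a, a, a}$, the iterated product $\toda{\toda{a_2, a, a, a}, a, a}$ contains $a_2 \cdot \toda{a, a, a, a, a} = a_2 b$. To upgrade membership to equality I invoke Theorem~\ref{thm:V1eo4} to see that $H^{2, 56}$ is one-dimensional, spanned by $a_2 b$, and I verify that $H^{1, 16}$ and $H^{1, 48}$ both vanish for degree reasons, so the indeterminacy $x_1 H^{1, 16} + H^{1, 48} a$ is zero.

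The main obstacle is pinning down the exact scalar coefficient rather than merely showing the Massey product is a nonzero multiple of $a_2 b$. A direct cobar assembly using the bounding chain $\tfrac{1}{2} a_4 r^2 + \tfrac{1}{3} a_3 r^3 + \tfrac{1}{4} a_2 r^4$ for $x_1 \cdot a$ (noticeably a formal antiderivative of the bar representative of $x_1$) produces a 2-cocycle only cohomologous to $a_2 b$, and extracting the precise coboundary witnessing this is tedious; the structural indeterminacy and juggling route sidesteps that calculation entirely.
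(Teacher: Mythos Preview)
Your argument is correct and, for the second bracket, essentially identical to the paper's: both invoke May's Jacobi/juggling identities to pass from $x_1=\toda{a_2,a,a,a}$ to $\toda{x_1,a,a}\ni a_2\toda{a,a,a,a,a}=a_2b$, and you add the indeterminacy check (vanishing of $H^{1,16}$ and $H^{1,48}$) that the paper leaves implicit.

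For the first bracket the routes differ in presentation. The paper simply points back to the earlier description $x_1=\toda{a_3,a,a}$ from \S\ref{sec:AllPrimes} and uses the Bockstein relation $d_1(a_3)=3a_2a$ to lengthen the bracket to $\toda{a_2,a,a,a}$. You instead write down an explicit defining system in the cobar complex. These are really the same argument unpacked: your choice $a_{13}=\tfrac{1}{3}a_3$ is exactly the null-homotopy of $a_2a$ that the paper's one-line argument invokes, and the sub-system $(a_{13},a_{24},a_{35})$ recovers the $\toda{a_3,a,a}$ representative of $x_1$. Your version has the advantage of being self-contained and of making the unit $\tfrac{1}{6}\equiv 1\pmod 5$ visible; the paper's version is shorter because it recycles work already done.
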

\begin{proof}
The first is an easy, direct consequence of the previous Massey product form of $x_1$ involving $a_3$. The second bracket follows from the former by a Jacobi identity for Toda brackets proved by May \cite{MayMatricMassey}.
\end{proof}

Just as with $a$, $b$, and $x_1$, the class $[a_3^2]$ has a topological interpretation: this is $v_2$.

\section{Computing $H^*(A/I_0,\Gamma/I_0)$ and $V(0)_\ast eo_4$}\label{sec:V0eo4}
Because the spectral sequence is so sparse, this is actually easier to compute than the previous term.

\subsection{The $d_1$-differential}
We first note the differential coming immediately from the bar complex:
\[
d_1(a_2)= -a_1a.
\]
To continue, we use the picture of $E_1$ (Figure \ref{fig:a1E1}), marking this differential. We will use similar notation as before, but here solid lines with represent $a_1$ multiplications while dashed lines will represent $a_2$
multiplication. To further simplify the picture, we use a circled star to indicate a polynomial algebra on both $a_1$ and $a_2$.

\begin{figure}[ht]
\includegraphics{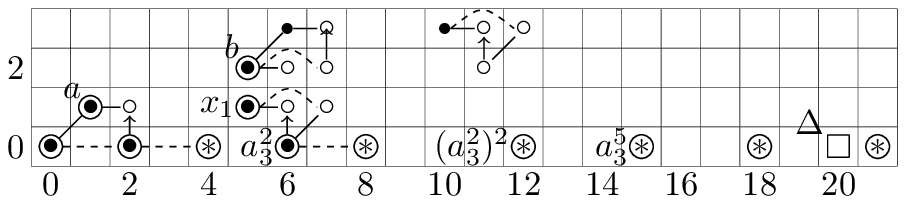}
\caption{$E_1$-page for $H^*(A/I_0,\Gamma/I_0)$} \label{fig:a1E1}
\end{figure}

\begin{prop}
We have $d_1([a_3^2])=3a_1x_1$.
\end{prop}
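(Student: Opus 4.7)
The plan is to carry out a direct cobar computation, exactly parallel to the existing computations of $d_1(a_2) = -a_1 a$ and $d_2(a_3^3) = -a_2^2 x_1$. Recall that $[a_3^2]$ was defined as $a_3^2 + 2a_2 a_4$, and that this combination is invariant modulo $I_1$. The $d_1$-differential in the Bockstein spectral sequence adding $a_1$ extracts the $a_1$-linear coefficient of $\eta_R - \eta_L$ applied to any chosen lift, so the entire task is to compute $\eta_R(a_3^2 + 2a_2 a_4)$ modulo $(5, a_1^2)$.

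First, I would record the relevant right units modulo $5$ using $\eta_R(a_i) = \sum_{j=0}^i \binom{5-j}{i-j} a_j r^{i-j}$, namely
\[
\eta_R(a_2) \equiv a_2 - a_1 r, \quad \eta_R(a_3) \equiv a_3 + 3a_2 r + a_1 r^2, \quad \eta_R(a_4) \equiv a_4 + 2a_3 r + 3a_2 r^2 - a_1 r^3
\]
all modulo $5$. Next I would expand $\eta_R(a_3)^2 + 2\eta_R(a_2)\eta_R(a_4)$ modulo $(5, a_1^2)$. The $a_1$-free part must reproduce $a_3^2 + 2a_2 a_4$; this serves as a consistency check on the invariance of $[a_3^2]$ modulo $I_1$ and pins down why the coefficient $+2$ on $a_2 a_4$ is forced (it is exactly what kills the leftover $5 a_2^2 r^2$ and the $a_2 a_3 r$ terms modulo $5$).

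The $a_1$-linear part then collects into $3 a_1 (a_4 r + a_3 r^2 + a_2 r^3)$. Using the explicit representative $x_1 = a_4 r + a_3 r^2 + a_2 r^3$ recorded at the end of the previous section, this is exactly $3 a_1 x_1$, which gives the claimed formula $d_1([a_3^2]) = 3 a_1 x_1$.

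The only real obstacle is bookkeeping: one must patiently track three products of three-term expansions modulo $5$ and modulo $a_1^2$, and verify that the six $a_1$-linear terms $3 a_1 a_2 r^3$, $2 a_1 a_3 r^2$, $-2a_1 a_2 r^3$, $-a_1 a_3 r^2$, $-2 a_1 a_4 r$, and $a_1 a_2 r^3$ (and so on, with their coefficients from the $+2$ in the definition of $[a_3^2]$) combine to a uniform coefficient of $3$ on each monomial of $x_1$. No Massey-product input or spectral sequence trickery is needed at this stage.
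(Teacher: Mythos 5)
Your proposal is correct and is essentially the paper's own proof: the paper likewise computes the bar differential directly on the representative $a_3^2+2a_2a_4$, recording the images of $a_3^2$ and $2a_2a_4$ modulo $5$ and adding them to obtain $3a_1x_1$ (plus a term $3a_1^2r^4$ of higher Bockstein filtration, which you correctly discard by working modulo $(5,a_1^2)$). The only quibble is that some coefficients in your illustrative list of $a_1$-linear terms are off --- for instance the cross term in $\eta_R(a_3)^2$ contributes $6a_1a_2r^3\equiv a_1a_2r^3$ and the term $2(-a_1r)(2a_3r^2)$ contributes $-4a_1a_3r^2\equiv a_1a_3r^2$ modulo $5$ --- but the corrected totals do combine to $3a_1(a_4r+a_3r^2+a_2r^3)=3a_1x_1$ exactly as you claim.
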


\begin{proof}
This is a fast computation in the bar complex. We saw that
$[a_3^2]$ is represented in the bar complex by $a_3^2+2a_2a_4.$ We
also have
\[
a_3^2\mapsto6a_2a_3r+2a_1a_3r^2+a_1a_2r^3+4a_2^2r^2+a_1^2r^4,
\]
while
\[
2a_2a_4\mapsto -a_2a_3r+a_2^2r^2+2a_1a_2r^3+3a_1a_4r+a_1a_3r^2+2a_1^2r^4.
\]
Adding these gives the result.
\end{proof}

This differential is suggested by to us by the topological story. Since $[a_3^2]$ represents $v_2$, in the story for $V(0)$, this class should kill $v_1 h_{1,1}$. We have already identified $v_1$ with $a_1$ and $h_{1,1}$ with $x_1$, predicting this differential.

\subsubsection{The $d_2$-differential and the $E_{\infty}$-page}

At this point, our spectral sequence is again very sparse (Figure
\ref{fig:a1E2}).

\begin{figure}[ht]
\includegraphics{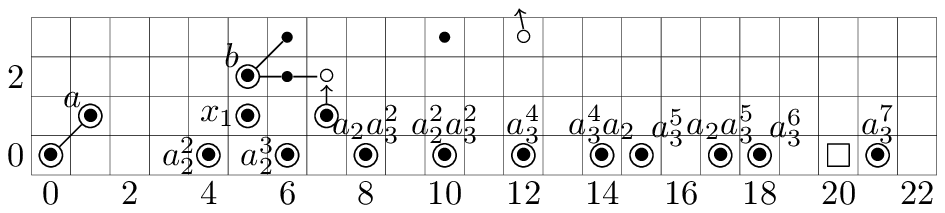}
\caption{$E_2$-Page of $H^*(A/I_0,\Gamma/I_0)$}
\label{fig:a1E2}
\end{figure}

We again see that we can have but a single differential.

\begin{prop}
We have $d_2(a_2x_1)=a_1^2b.$
\end{prop}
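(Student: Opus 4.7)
The plan is to mimic the explicit bar-complex arguments used for the preceding propositions in this section: exhibit a specific 1-cochain in $C^1(A/(5))$ lifting $a_2 x_1$, and check by direct computation that its bar differential is $a_1^2 b$ modulo higher powers of $a_1$.

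First I would verify that $a_2 x_1$ is a legitimate class on the $E_2$-page. Even though $d_1(a_2)=-a_1 a$ kills $a_2$ on its own, Leibniz gives $d_1(a_2 x_1)=-a_1\,(a\cdot x_1)+a_2\cdot d_1(x_1)$. The relation $(a,x_1)^2=0$ from Theorem \ref{thm:V1eo4} forces $a\cdot x_1=0$ in $H^*(A/I_1,\Gamma/I_1)$, and $x_1$ is a $d_1$-cycle, so $d_1(a_2 x_1)=0$.

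Next I would construct the lift. The naive representative $x_1=a_4 r+a_3 r^2+a_2 r^3$ is not itself a cocycle in $C^*(A/(5))$: using $d(a_i)=\eta_R(a_i)-a_i$ together with the cobar formula $d(r^k)=-\sum_{j=1}^{k-1}\binom{k}{j}r^{k-j}\otimes r^j$ and the mod-$5$ identity $d(r^4)\equiv r^3\otimes r-r^2\otimes r^2+r\otimes r^3$, one computes $d(x_1)=-a_1\, d(r^4)$, so $\tilde{x}_1:=x_1+a_1 r^4$ is a true cocycle lift. Setting $\alpha_0:=a_2\tilde{x}_1$, Leibniz and $d(a_2)=-a_1 r$ give $d(\alpha_0)=-a_1(r\otimes\tilde{x}_1)$. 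Pushing the $A$-coefficients in $\tilde{x}_1$ across the tensor via $r\otimes a_i r^k=r\eta_R(a_i)\otimes r^k$ produces additional $a_1$-contributions from the $\eta_R$-formulas, so $d(\alpha_0)=-a_1\zeta_1+a_1^2\zeta_2$ for explicit $\zeta_1,\zeta_2\in C^2(A/(5))$.

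The final step is to verify that $\zeta_1$, which represents $a\cdot x_1=0$ in $H^*(A/I_1)$, bounds explicitly as $\zeta_1=d(u)$ for some 1-cochain $u$, and that after the correction $\alpha:=\alpha_0+a_1 u$ the resulting $a_1^2$-coefficient of $d(\alpha)$ is cohomologous in $C^*(A/I_1)$ to the bar representative $b=r^4\otimes r+2r^3\otimes r^2+2r^2\otimes r^3+r\otimes r^4$. The main obstacle is identifying the null-cochain $u$; I would use the Toda representation $x_1=\toda{a_2,a,a,a}$ from Lemma \ref{lem:V1MasseyProducts}, whose defining system supplies $u$ directly. As an independent sanity check, the companion Toda identity $a_2 b=\toda{x_1,a,a}$ of the same lemma, combined with $d_1(a_2)=-a_1 a$ and a May-style Leibniz rule for Bockstein differentials on Massey products, predicts the target $a_1^2 b$, matching the claim.
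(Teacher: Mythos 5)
Your proposal is correct and is essentially the paper's own proof: the paper likewise works directly in the bar complex, computing the differential of $a_2x_1=a_2a_4r+a_2a_3r^2+a_2^2r^3$ and then adding an explicit $a_1$-divisible correction term, namely $-a_1a_2r^4+a_1a_3r^3+2a_1a_4r^2$, so that the bar differential becomes exactly $a_1^2b$; your two-stage correction (first $\tilde{x}_1=x_1+a_1r^4$, which indeed equals $-r^5$ in $\bar{\Gamma}$ and is therefore a mod-$5$ cocycle, then $a_1u$ with $d(u)=\zeta_1$) packages the same computation, and your checkable intermediate claims, including $d(x_1)=-a_1\,d(r^4)$ and the $E_2$-survival of $a_2x_1$ via $ax_1=0$, are all correct. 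The only difference is bookkeeping: the paper exhibits the full correction cochain in one shot rather than extracting the null-homotopy $u$ from a Massey-product defining system, a heuristic the paper relegates to the remark following its proof.
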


\begin{proof}
We start by computing the
bar differential on $a_2x_1=a_2^2r^3+a_2a_3r^2+a_2a_4r:$
\begin{multline*}
a_2x_1\mapsto
(2a_2a_3r|r)+(4a_1a_4r|r)+(3a_2^2r^2|r)+(3a_1a_3r^2|r)+(a_1a_2r^3|r)+(a_1^2r^4|r)
\\ +
(3a_2^2r|r^2)+(4a_1a_3r|r^2)+(3a_1a_2r^2|r^2)-(a_1^2r^3|r^2)-(2a_2a_3r|r) \\
+ (3a_1a_2r|r^3)+(a_1^2r^2|r^3)-(3a_2^2r^2|r)-(3a_2^2r|r^2).
\end{multline*}
If we add to $a_2x_1$ the element $-a_1a_2r^4+a_1a_3r^3+2a_1a_4r^2$, then the bar differential is exactly
$a_1^2b.$ \end{proof}

\begin{remark}
This differential is also predictable from Massey product considerations. Since the bracket defining $x_1$ contains a copy of $a_2$, we should expect a higher differential on $a_2x_1$ of the form
\[
a_2x_1\mapsto \toda{a_1a, \toda{a_1a,a,a,a}}.
\]
Shuffling and May's Jacobi identity then produce the desired description.
\end{remark}

It is clear that no further differentials are possible, so the spectral sequence collapses here. The result at this stage is a computation of the Adams-Novikov $E_2$-term for $V(0)_\ast eo_4$. We remark that just as for the $V(1)$ homology case, there is a hidden $a$ multiplication linking $x_1$ and $a_1b$. There is also a Massey product description of $x_1$:
\[
x_1=\toda{a_1,a,a,a,a}.
\]

\section{The $5$-local cohomology of $(A,\Gamma)$}\label{sec:eo4}
Everything we have done so far has led us to compute what happens
when we run the $5$-adic Bockstein spectral sequence. There is already an obvious
differential given by
\[
a_1\mapsto 5r,
\]
and since $5\neq 0$, we now have a differential on $x_1$:
\[
-x_1=r^5\mapsto 5r^4|r+10r^3|r^2+10r^2|r^3+5r|r^4=5b.
\]
This gives us all of the differentials for
dimension reasons, as we immediately see (Figure~\ref{fig:a0E1}).

\begin{figure}[ht]
\includegraphics{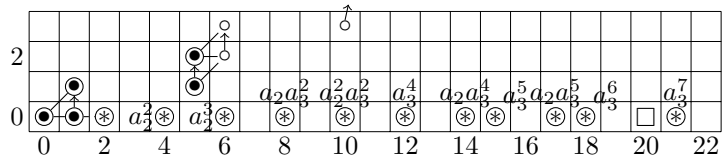}
\caption{$E_1$-Page for $H^*(A,\Gamma)$} \label{fig:a0E1}
\end{figure}

Since there are no more possible differentials, we conclude that $E_2=E_{\infty}.$ Additionally, we know the leading terms of the generators of $H^0$, since this is exactly what the Bockstein spectral sequences have been computing for us.

\begin{cor}
As an algebra, $H^0(A,\Gamma)$ is as described in Theorem
\ref{Thm:MainThm}.
\end{cor}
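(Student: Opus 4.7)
The plan is to package the $s=0$ rows of the Bockstein $E_\infty$-pages obtained in the preceding sections into a presentation of $H^0(A,\Gamma)$. Each Bockstein spectral sequence used so far computes an associated graded of $H^*(A,\Gamma)$, or of an intermediate quotient, with respect to a filtration by powers of one of the invariant ideals $I_k$ or by powers of $5$. Consequently, the multiplicative generators and relations of $H^0(A,\Gamma)$ are in bijection, modulo Bockstein-filtration extensions, with those appearing on the $s=0$ lines of the various $E_\infty$-pages.

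I would proceed from the inside out. The $s=0$ line of $H^*(A/I_{p-3},\Gamma/I_{p-3})$ is generated by $a_3$ and $a_4^5\equiv\Delta$, contributing the leading terms of $c_3$ and of $\Delta$. Passing to $H^*(A/I_1,\Gamma/I_1)$, Theorem~\ref{thm:V1eo4} supplies the $H^0$ classes $[a_3^2]$ and $[a_3^5]$, together with the first genuinely nontrivial polynomial relation linking $[a_3^2]^5$ and $[a_3^5]^2$. The final two Bocksteins, from $H^*(A/I_0,\Gamma/I_0)$ up to $H^*(A,\Gamma)$, are what introduce $c_2$ and all of the $\Delta_i$ and $\Delta'_j$: a new generator appears precisely where a polynomial combination of already existing generators becomes divisible by the current Bockstein parameter in the next layer. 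Enumerating these divisibilities produces the index range $i\in\{4,\dots,22\}\setminus\{20\}$, with $i=15$ and $i=18$ carrying a second generator $\Delta'_j$.

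Explicit representatives in $A$ come from the bar-cocycle formulas already recorded in earlier propositions (such as $[a_3^2]=a_3^2+2a_2a_4$ and the stated expression for $\Delta$); where necessary these are refined inductively by absorbing lower-filtration correction terms until the class is genuinely $5$-locally invariant. The relations are then assembled from the polynomial relations on each $E_\infty$-page together with the hidden multiplicative extensions observed in each Bockstein.

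The main obstacle is bookkeeping rather than mathematics: one must confirm, for each proposed generator, that it is not already a polynomial expression in the preceding ones, and that the relations recorded in Table~\ref{Tab:GenRel} together recover the full ideal of relations. Both are finite checks; a Poincar\'e series comparison against the Bockstein $E_\infty$-pages provides a clean verification.
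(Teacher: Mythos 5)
Your proposal is correct and takes essentially the same route as the paper: the paper's proof likewise reads the algebra generators off the $s=0$ lines of the successive Bockstein $E_\infty$-pages and treats the relations as immediate algebraic manipulation, with the divisibility/depth bookkeeping you describe being exactly the method spelled out in \S\ref{sec:relations}. Your version merely makes explicit the filtration-extension and verification steps that the paper leaves implicit.
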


\begin{proof}
The Bockstein spectral sequences demonstrated that the classes given are the algebra generators. The relations are simple consequences of algebraic manipulations, so these are also immediate.
\end{proof}

Putting everything we have seen so far together allows us to show the following theorem.

\begin{thm}
As an algebra,
\[H^*(A,\Gamma)=H^0(A,\Gamma)[a,b]/\Big(a^2,\mathfrak{m}\cdot(a,b)\Big).\]
\end{thm}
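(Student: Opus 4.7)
The plan is to read off $H^\ast(A,\Gamma)$ from the $E_\infty$-pages of the sequence of Bockstein spectral sequences computed above, culminating in the $5$-adic one of Figure~\ref{fig:a0E1}. First I would use these pictures to observe that strictly above the zero line the final $E_\infty$-page consists entirely of $H^0$-multiples of $a$ and $b$: the differentials $d_1(a_1)\doteq 5a$ and $d_1(x_1)\doteq -5b$, together with their derived consequences from the $V(0)$- and $V(1)$-stages (such as $d_2(a_2x_1)\doteq a_1^2 b$ and $d_1([a_3^2])\doteq 3a_1 x_1$), have killed every class strictly above the zero line that is not an $H^0$-multiple of $a$ or $b$. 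Combined with Theorem~\ref{Thm:MainThm} for the zero line and with the relation $a^2=0$ inherited from the initial exterior factor in $H^\ast(A/I_{p-1},\Gamma/I_{p-1})=\F_5[b]\otimes E(a)$, this already exhibits $H^\ast(A,\Gamma)$ as an $H^0$-algebra generated by $a$ and $b$, subject to $a^2=0$ and to some annihilator ideal $\mathfrak n\subseteq H^0$ acting on both $a$ and $b$.

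The main step is to identify $\mathfrak n=\mathfrak m$. The integral differentials above give $5a=5b=0$ immediately, so $5\in\mathfrak n$. For $c_2$ and $c_3$, the formula $c_i=\sum_j\binom{5-j}{i-j}(-1)^j a_j a_1^{i-j}5^j$ from the rational lemma shows both are divisible by $5$ in $A$ (since $\binom{5}{2}$ and $\binom{5}{3}$ are multiples of $5$ and every other summand already carries a factor of $5$), so multiplication of $a$ or $b$ by $c_2$ or $c_3$ factors through $5a=0$ and $5b=0$. For each $\Delta_i$ and $\Delta'_j$, inspection of Table~\ref{Tab:GenRel} shows that the generator lies in the ideal $(5,a_1,a_2,a_3,a_4)$ of $A$, so one argues stage-by-stage in the Bockstein spectral sequences: either the generator is divisible by $5$, or it is divisible by some $a_k$ whose multiplication on $a$ or $b$ is already trivial in the relevant intermediate $E_\infty$-page (for instance, $a_2\cdot a=0$ in $H^\ast(A/I_1,\Gamma/I_1)$ by Theorem~\ref{thm:V1eo4}, and $a_3 a=a_4 x_k=0$ from the earlier stages). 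In every case the product vanishes in $H^\ast(A,\Gamma)$, giving $\mathfrak m\subseteq\mathfrak n$.

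Finally, the reverse containment and the absence of hidden multiplicative extensions follow by a tri-degreewise dimension count: the proposed ring $H^0(A,\Gamma)[a,b]/(a^2,\mathfrak m(a,b))$ has, in each tri-degree, exactly the rank of the corresponding summand of the final Bockstein $E_\infty$-page, so no further relations are possible and $\mathfrak n=\mathfrak m$ is forced. The main obstacle is the generator-by-generator bookkeeping for the $\Delta_i$ and $\Delta'_j$ once Table~\ref{Tab:GenRel} is spelled out; however, each such verification reduces to a one-line check that the Bockstein representative of the generator is visibly divisible by $5$ or by an $a_k$ already known to annihilate $a$ and $b$ at the appropriate stage, so the overall argument is routine rather than conceptual.
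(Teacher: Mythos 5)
Your overall route is the paper's own: its proof is a single sentence observing that the only surprising relation is $\mathfrak m\cdot(a,b)$, which ``follows from the earlier fact that terms dominated in $(a_1,a_2,a_3)(a,b)$ were zero by the time we reached this last page.'' Your expansion of that sentence, however, contains two concrete errors. First, with the normalization of Table~\ref{Tab:GenRel} (which is what the generators of $\mathfrak m$ actually are), $c_2=-2a_1^2+5a_2$ and $c_3=4a_1^3-15a_1a_2+25a_3$ are \emph{not} divisible by $5$ in $A$; your divisibility claim holds only for the un-rescaled classes $c_i'$ appearing in the rational lemma. What is true is that $c_2,c_3\in(5,a_1)$, so they must be handled by your second mechanism, not the first. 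Second, that mechanism is misstated: it is false that multiplication by $a_1$ on $a$ and $b$ is ``already trivial'' at the intermediate stage. The paper explicitly records a hidden $a$-multiplication linking $x_1$ and $a_1b$ in $H^*(A/I_0,\Gamma/I_0)$, i.e.\ $a_1b=ax_1\neq 0$ there, and $a_1x_1$ is likewise nonzero (it is one of the $b$-torsion-free classes used in \S\ref{sec:AdamsDiff}). Vanishing in an intermediate $E_\infty$-page only gives vanishing in the associated graded; hidden extensions genuinely occur at these stages, which is exactly why the paper's phrase is calibrated to the \emph{last} page, where $x_1$ has supported the differential hitting $5b$ and the offending tridegrees are empty.

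Fortunately, your closing tridegree-by-tridegree count is the robust version of the argument and repairs both slips. Above the zero line the final $E_\infty$-page is additively $\F_5[\Delta,b]\{a,b,ab\}$ with $a^2=0$, and for each generator $g$ of $\mathfrak m$ the product $g\cdot ab^j\Delta^k$ or $g\cdot b^j\Delta^k$ lands in a tridegree where the final page vanishes: within a fixed cohomological degree this comes down to the internal degree of $g$ not being a multiple of $|\Delta|=160$, which holds for every generator precisely because $\Delta_{20}$ is excluded in Theorem~\ref{Thm:MainThm}, and the remaining factor of $5$ dies because each surviving slot is an $\F_5$. If you lead with this count, the generator-by-generator leading-term bookkeeping of your second paragraph, with its hidden-extension pitfalls, can be dropped entirely.
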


\begin{proof}
The only surprise relation is $\mathfrak{m}\cdot(a,b)$, and this follows from the earlier fact that terms dominated in $(a_1,a_2,a_3)(a,b)$ were zero by the time we reached this last page.
\end{proof}

\section{Adams-Novikov Differentials}\label{sec:AdamsDiff}
All of the Adams-Novikov differentials arise from the Toda differential via standard techniques in stable homotopy theory. However, in the $V(1)$-homology case, there are some differentials of intermediate length arising from the Toda differential which the author had not previously seen.

\subsection{The $5$-local homotopy of $eo_4$}
In this section, we compute the Adams differentials for the homotopy of $eo_4$. This result follows easily from Hopkins and Miller's computation of the differentials for $EO_4$ \cite{HennFiniteKnResolutions}. The techniques and results are also very similar to those employed by Ravenel in \cite{RavenelArf}.

\begin{thm}
We have $d_9(\Delta)=ab^4.$
\end{thm}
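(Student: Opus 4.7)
The plan is to import this differential from a known computation rather than build it from scratch. Two routes work: the classical Toda differential in the Adams--Novikov spectral sequence for the sphere at $p=5$, pulled along the unit map $S_{(5)}\to eo_4$, or equivalently Hopkins--Miller's computation of the ANSS for $EO_4=L_{K(4)}eo_4$, pulled along $K(4)$-localization. Since the excerpt already points toward Hopkins--Miller, I would favor the second route.

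Identifying the classes is straightforward. The earlier computation identifies $a$ with $\alpha_1=h_{1,0}$ and $b$ with $\beta_1=b_{1,0}$, and $\Delta\in H^0(A,\Gamma)$ is, by construction, the unique algebra generator in stem $2p(p-1)^2=160$. The element $ab^4$ sits in cohomological degree $9$ and stem $159$, which is precisely the bidegree of the target of a $d_9$ on $\Delta$. Our description $H^*(A,\Gamma)=H^0(A,\Gamma)[a,b]/\bigl(a^2,\mathfrak{m}\cdot(a,b)\bigr)$ then forces the target: the relation $\mathfrak{m}\cdot(a,b)=0$ kills every product of $a$ or $b$ with a positive-degree element of $H^0$, so up to an $\F_5$-scalar, $ab^4$ is the only class available in that tridegree.

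The same sparsity rules out any shorter differential on $\Delta$. A hypothetical $d_r(\Delta)$ for $r<9$ would have to land in cohomological degree $r$ and stem $159$; inspection of the polynomial structure of $H^0$ together with $\mathfrak{m}\cdot(a,b)=0$ shows that there are no candidates in those tridegrees. Hence $d_9$ is the first differential $\Delta$ can support, and its target is determined up to a unit in $\F_5$.

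The final step is to check that the relevant scalar is nonzero. Here the map $eo_4\to EO_4$ induces a map of ANSS's which, after inverting $\Delta$, is an isomorphism on $E_2$ in the relevant range; Hopkins--Miller's differential $d_9(\Delta)=\alpha_1\beta_1^{p-1}$ in $EO_4$ therefore pulls back to the asserted differential in $eo_4$, with the same nonzero scalar. The main obstacle is precisely this last bookkeeping: ensuring that the comparison map is injective on the source class $\Delta$ and on the target $ab^4$ at the $E_9$-page, so that a nonvanishing differential in $EO_4$ detects a nonvanishing differential in $eo_4$. Alternatively, one can bypass $EO_4$ entirely by recognizing $\Delta$ modulo lower Bockstein filtration as a representative of Toda's class $\beta_{p/p}$ and reading off $d_{2p-1}(\beta_{p/p})\doteq\alpha_1\beta_1^{p-1}$ directly, as in Ravenel's Arf invariant calculations.
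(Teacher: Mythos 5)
Your main line of argument is correct, but it runs in the opposite direction from the paper's proof. The paper pushes forward from the sphere: since the unit map $S^0\to eo_4$ carries $\alpha_1$ to $a$ and $\beta_1$ to $b$, Toda's relation $\alpha_1\beta_1^5=0$ in $\pi_\ast(S^0)$ forces the nonzero $E_2$-class $ab^5$ to die, and for degree reasons the only differential that can kill it is $d_9(b\Delta)=ab^5$, whence $d_9(\Delta)=ab^4$. You instead pull back from $EO_4$ along $eo_4\to L_{K(4)}eo_4=EO_4$, using the $E_2$-comparison of \S\ref{sec:EOp-1Intro} (injectivity of $\Ext_{(A,\Gamma)}(A,A)\to\Ext_{(A,\Gamma)}(A,A)[\Delta^{-1}]^{\wedge}_{\mathfrak m}$ on $\Delta$ and on the $\mathfrak m$-torsion class $ab^4$) together with a sparsity count from $H^*(A,\Gamma)=H^0(A,\Gamma)[a,b]/\big(a^2,\mathfrak m\cdot(a,b)\big)$; your check that the bidegrees $(r,159+r)$ are empty for $2\le r<9$ and that the target group at $r=9$ is exactly $\F_5\{ab^4\}$ is right, so the Hopkins--Miller differential pins down the scalar. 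The trade-off: the paper's route needs only Toda's classical relation in $\pi_\ast(S^0)$ and is independent both of the (unpublished) Hopkins--Miller computation and of identifying the nonconnective $EO_4$ spectral sequence with the completed, localized $\Ext$ --- which is exactly the bookkeeping you flag as the main obstacle; your route, once that comparison is granted, requires no homotopy-theoretic input from the sphere and imports all of the $EO_4$ differentials at once.

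Your fallback through Ravenel is garbled, however. At $p=5$ the class $\beta_{p/p}$ lies in stem $2p(p^2-1)-2p(p-1)-2=198$, so it is detected by $b\Delta$ (stem $198$), not by $\Delta$ (stem $160$); and the Toda differential reads $d_{2p-1}(\beta_{p/p})\doteq\alpha_1\beta_1^{p}$, not $\alpha_1\beta_1^{p-1}$. These two slips cancel to give the right conclusion, but as written the aside is false; corrected, it becomes $d_9(b\Delta)\doteq ab^5$ followed by division by $b$ --- which is precisely the paper's argument.
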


\begin{proof}
The unit map $S^0\to EO_4$ factors through $eo_4$, so in particular $a$ corresponds to the homotopy element $\alpha_1\in \pi_7(S^0)$ and $b$ corresponds to $\beta_1\in\pi_{38}(S^0)$. The Toda relation $\alpha_1\beta_1^5$ in $\pi_\ast(S^0)$ guarantees that $ab^5$ must be zero in the homotopy of $eo_4$. For degree reasons, the only possible differential realizing this is
\[
d_9(b\Delta)=ab^5,
\]
giving the result.
\end{proof}

We can see hidden multiplicative extensions by considering the Massey product representatives of the ``left-over'' classes $[a\Delta]$, $[a\Delta^2]$, and $[a\Delta^3].$

\begin{prop}
The one-line of the $E_{10}$ page of Adams-Novikov spectral sequence is given by $a$, $[a\Delta^4]$, the classes
\begin{align*}
[a\Delta]&=\toda{\iota,ab^4,a} \\
[a\Delta^2]&=\toda{\iota,ab^4,ab^4,a} \\
[a\Delta^3]&=\toda{\iota,ab^4,ab^4,ab^4,a},
\end{align*}
and their $\Delta^5$ translates.
\end{prop}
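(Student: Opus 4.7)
The plan is to separate the statement into two parts: first identify the $1$-line of $E_{10}$ additively, then produce the Toda bracket representatives.

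For the additive identification, by the structure theorem just established, the $1$-line of $H^*(A,\Gamma)$ modulo $b$-divisible classes is a free $\Z_{(5)}[\Delta]$-module on $a$, since $\mathfrak m\cdot a=0$ kills all other candidate multiplications. Applying Leibniz to the Toda differential $d_9(\Delta)=ab^4$ gives $d_9(\Delta^k)=k\Delta^{k-1}ab^4$, so $\Delta^k$ is a $d_9$-cycle precisely when $5\mid k$, while $a\Delta^k$ is always a $d_9$-cycle because $a^2=0$ and cannot be a $d_9$-boundary for filtration reasons. Thus the $1$-line of $E_{10}$ (modulo $b$-multiples, which are annihilated by the classes in $\mathfrak m$) is the free $\Z_{(5)}[\Delta^5]$-module on $a$, $a\Delta$, $a\Delta^2$, $a\Delta^3$, $a\Delta^4$.

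For the Toda bracket description, I would invoke Moss's convergence theorem for the Adams--Novikov spectral sequence. Since $d_9(\Delta)=ab^4$ in $E_9$, the class $ab^4$ vanishes in $\pi_\ast eo_4$, with null-homotopy witnessed by $\Delta$; combined with $a\cdot a=0$, this makes the bracket $\toda{\iota,ab^4,a}$ defined. Moss's theorem then identifies this bracket, modulo its indeterminacy, with the algebraic Massey product $\toda{1,ab^4,a}$ computed in $E_{10}$, and a direct bar-complex computation (shuffling the null-homotopy $\Delta$ of $ab^4$ past $a$) produces $a\Delta$ up to a unit. This establishes the formula for $[a\Delta]$.

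The formulas for $[a\Delta^2]$ and $[a\Delta^3]$ follow by iteration. Once $ab^4=0$ in homotopy, the product $(ab^4)\cdot(ab^4)=a^2b^8=0$ is trivial for a second reason, so successive fourfold and fivefold matric Massey products $\toda{\iota,ab^4,ab^4,a}$ and $\toda{\iota,ab^4,ab^4,ab^4,a}$ are defined. The Leibniz relations $d_9(\Delta^k)=k\Delta^{k-1}ab^4$ supply the defining null-homotopies at each stage, and May's higher convergence theorem, together with a standard juggling argument using the Jacobi identity invoked already in Lemma \ref{lem:V1MasseyProducts}, identifies these brackets with $a\Delta^2$ and $a\Delta^3$ respectively, up to units.

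The main obstacle I anticipate is bookkeeping the indeterminacy in the higher Massey products and verifying that the ambiguity stays within the scalar multiples of the intended representative. In particular, at $k=4$ the bracket $\toda{\iota,ab^4,ab^4,ab^4,ab^4,a}$ would have indeterminacy containing $\Delta^5$-translates of lower $[a\Delta^j]$, which is presumably why $[a\Delta^4]$ is listed without a bracket description: one cannot pin it down inside a single coset. Carefully tracking the filtration of the indeterminacy at each step is the technical heart of the argument.
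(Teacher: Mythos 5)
Your additive argument is sound and is essentially the only thing the paper has to say here (the proposition is stated without proof, as an immediate consequence of $d_9(\Delta)=ab^4$, Leibniz, and the sparseness of $E_9=E_2$ above filtration zero), modulo one slip: since $5\in\mathfrak m$ and $\mathfrak m\cdot a=0$, the one-line is a free $\F_5[\Delta]$-module on $a$, not a free $\Z_{(5)}[\Delta]$-module, and likewise the $E_{10}$ one-line is free over $\F_5[\Delta^5]$. The genuine problem is your framing of the bracket identifications through Moss's theorem. Read as Toda brackets in $\pi_\ast eo_4$, the expressions $\toda{\iota,ab^4,a}$ are degenerate: the content of $d_9(\Delta)=ab^4$ is precisely that $ab^4$ is \emph{zero} in homotopy, and any bracket with first entry $\iota$ has indeterminacy containing $\iota\cdot\pi_\ast$, i.e.\ the entire group in that degree, so the homotopy-level statement pins down nothing. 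The brackets in the proposition are Massey products formed in the differential algebra $(E_9,d_9)$, in the sense of May's matric Massey products in spectral sequences \cite{MayMatricMassey, GreenBook}: the defining systems use $\Delta$, $\Delta^2/2!$, and $\Delta^3/3!$ as null-homotopies of $\iota\cdot ab^4$, $\Delta\cdot ab^4$, and $(\Delta^2/2)\cdot ab^4$, while $a^2=0$ and $(ab^4)^2=0$ close the systems. Your ``direct computation shuffling the null-homotopy $\Delta$ past $a$'' is in fact the entire proof; the Moss layer adds nothing and, as stated, would fail.

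Your diagnosis of $[a\Delta^4]$ is also off in a way that matters downstream. The sixfold bracket $\toda{\iota,ab^4,ab^4,ab^4,ab^4,a}$ is not merely plagued by indeterminacy: it is not defined. The next null-homotopy in the defining system would have to be $\Delta^5/5!$, and division by $5$ is unavailable; equivalently, $d_9(\Delta^5)=5\Delta^4ab^4=0$, so $\Delta^4\,ab^4$ survives to $E_{10}$ rather than bounding, and the fivefold subbracket $\toda{a,ab^4,ab^4,ab^4,ab^4}\doteq b^{17}$ is nonzero, obstructing strict definedness. This failure of divisibility by $5$ is exactly what resurfaces as the differential $d_{33}([a\Delta^4])\doteq b^{17}$ in the next proposition, so attributing the absence of a bracket description for $[a\Delta^4]$ to uncontrollable indeterminacy, rather than to non-definedness, misses the mechanism driving the remainder of the section.
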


\begin{prop}
We have hidden multiplicative extensions
\[
[a\Delta^{i}]\cdot[a\Delta^{j}]=\begin{cases}
b^{13} & \text{when }i+j=3, \\
0 & \text{otherwise.}
\end{cases}
\]
\end{prop}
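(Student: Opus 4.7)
My plan splits the proposition into the vanishing case $i+j\ne 3$, handled by a dimension count on $E_\infty$, and the nonzero case $i+j=3$, handled by juggling Toda brackets using the Massey product representations from the previous proposition.

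Since $a^2=0$ in $H^*(A,\Gamma)$, every product $[a\Delta^i]\cdot[a\Delta^j]$ is zero on the $E_{10}$-page, so any nonzero value in $\pi_*(eo_4)$ must be detected in strictly higher Adams-Novikov filtration. Consulting Theorem~\ref{Thm:MainThm} together with the relation $\mathfrak m\cdot(a,b)=0$ and the Toda differential $d_9(\Delta)=ab^4$, the surviving higher-filtration cycles are spanned by $b^k\Delta^{5m}$ and $ab^k\Delta^{5m}$. The product sits in total degree $160(i+j)+14$; a short mod-$8$ and mod-$19$ check against the possible values of $(k,m)$ shows that the only dimension in which any such monomial can land is $i+j=3$, $m=0$, $k=13$, namely the class $b^{13}$. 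Hence the product vanishes whenever $i+j\ne 3$.

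For $i+j=3$ it suffices, by symmetry, to treat
\[
[a\Delta]\cdot[a\Delta^2]=\toda{\iota,ab^4,a}\cdot\toda{\iota,ab^4,ab^4,a}.
\]
The first step is one juggle using $\toda{x,y,z}\cdot w=-x\cdot\toda{y,z,w}$, which is permitted because $a\cdot[a\Delta^2]$ vanishes in $\pi_*(eo_4)$ by the case already treated; this produces $[a\Delta]\cdot[a\Delta^2]\in-\toda{ab^4,a,[a\Delta^2]}$. Expanding $[a\Delta^2]$ as $\toda{\iota,ab^4,ab^4,a}$ and iteratively flattening the nested bracket identifies the product with the five-fold Toda bracket $\toda{a,ab^4,ab^4,ab^4,a}$. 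Pulling the factor $b^4$ out of each interior slot by multilinearity yields
\[
b^{12}\cdot\toda{a,a,a,a,a}=b^{12}\cdot b=b^{13},
\]
using the standard identification $\beta_1=\toda{\alpha_1,\alpha_1,\alpha_1,\alpha_1,\alpha_1}$ at the prime $5$.

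The main technical obstacle is controlling indeterminacy: flattening a nested $4$-fold inside a $3$-fold into a genuine $5$-fold Toda bracket, and then pulling $b^4$ out of each middle slot, are matric Massey product manipulations that require tracking auxiliary null-homotopies. The dimension bookkeeping from the vanishing case is quite useful here, since any indeterminacy component that would land in $b^k\Delta^{5m}$ or $ab^k\Delta^{5m}$ at the wrong dimension is automatically zero. If the juggling becomes unwieldy, an alternative is to work directly in the cobar complex for $(A,\Gamma)$: choose an explicit cobar cochain witnessing $\Delta$ as a bounding chain for $ab^4$, and use this null-homotopy, together with the analogous bounding data for $a^2=0$ and $(ab^4)^2=0$, to extract a cobar representative of $b^{13}$ out of the product.
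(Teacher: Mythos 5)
Your argument is essentially the paper's: the paper's entire proof is a one-line appeal to May's Jacobi identity and classical shuffling of Toda brackets, which is exactly the juggling you carry out explicitly, flattening the nested brackets from the Massey product representatives into the five-fold bracket $\toda{a,ab^4,ab^4,ab^4,a}$ and extracting $b^{12}\toda{a,a,a,a,a}=b^{13}$. Your dimension count for the vanishing cases ($i+j\neq 3$) and your attention to indeterminacy are correct elaborations of details the paper leaves implicit rather than a different route.
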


\begin{proof}
The hidden extension follows by May's Jacobi identity and classical shuffling results.
\end{proof}

Our $d_9$-differential gives rise via a Toda style argument to a higher differential.

\begin{prop}
We have $d_{33}([a\Delta^4])\doteq b^{17}.$
\end{prop}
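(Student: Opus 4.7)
The plan is to derive this $d_{33}$-differential as a Toda-style consequence of $d_9(\Delta)=ab^4$, following the remark immediately preceding the statement. The first step is to establish that the class $[a\Delta^4]$ extends the pattern from the preceding proposition one step further, as the $6$-fold Toda bracket
\[
[a\Delta^4]=\toda{\iota,ab^4,ab^4,ab^4,ab^4,a}
\]
in $\pi_\ast(eo_4)$. This is verified by the same inductive argument that produced the analogous expressions for $[a\Delta^i]$, $i=1,2,3$: the only nontrivial adjacent vanishing to check is $\iota\cdot ab^4=0$ in $E_{10}$, which holds because $ab^4$ is the target of $d_9(\Delta)$, while the remaining adjacent products vanish already in $E_2$ using $a^2=0$. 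Moss's convergence theorem then identifies this Toda bracket as being detected by $a\Delta^4$.

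With this representation in hand, I would apply May's formula for differentials on Massey products (\cite{MayMatricMassey}, as summarized in \cite{GreenBook}). Since each of the four interior $ab^4$ entries is $d_9(\Delta)$, one uses $\Delta$ as the nullhomotopy for the initial product $\iota\cdot ab^4$ in the defining system, and then shuffles via the Jacobi identity to rewrite the bracket modulo higher filtration as an iterated expression built from $\Delta$ and the identity $b=\toda{a,a,a,a,a}$. Careful degree bookkeeping shows that the resulting differential lands in filtration $33$ and in bidegree $(34,680)$, where $b^{17}$ is the unique nonzero class on the $E_{33}$-page.

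The main obstacle is the combinatorics of the iterated defining system and the Massey product indeterminacy. Specifically, one must check that the four potential nullhomotopies can be inserted consistently in the defining system of a single $6$-fold bracket, that the coefficient of $b^{17}$ that emerges is a unit mod $5$ (so the differential is nontrivial rather than vacuously zero), and that no class of lower filtration in bidegree $(34,680)$ on the $E_{33}$-page absorbs the target. The sparsity of $H^\ast(A,\Gamma)$ in this range, together with the explicit algebra structure recorded in the main theorem, should make the last of these checks routine once the Massey product manipulation has been carried through.
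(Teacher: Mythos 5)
Your first step is where the argument breaks, and it breaks in a self-defeating way. You propose to write $[a\Delta^4]$ as a defined $6$-fold Toda bracket $\toda{\iota,ab^4,ab^4,ab^4,ab^4,a}$ \emph{in $\pi_\ast(eo_4)$}, detected by $a\Delta^4$ via Moss's theorem. But a class that detects an actual homotopy element is a permanent cycle, so if this bracket were defined and detected by $a\Delta^4$, then $a\Delta^4$ could not support $d_{33}$ --- contradicting the very statement you are proving. This is exactly why the paper's preceding proposition lists $[a\Delta^4]$ \emph{separately} from the bracket classes $[a\Delta^i]$, $i=1,2,3$: the inductive pattern of brackets stops at $i=3$, and the obstruction to forming the next bracket \emph{is} the differential. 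Note also that a $6$-fold bracket requires a full defining system, i.e.\ coherent vanishing of all lower sub-brackets, not merely of the adjacent products; the $5$-fold sub-bracket $\toda{a,ab^4,ab^4,ab^4,ab^4}$ contains $b^{17}$ (up to a unit, by shuffling against $b=\toda{a,a,a,a,a}$), so its vanishing in homotopy is essentially equivalent to the conclusion, and your construction is circular.

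The paper's argument runs in the opposite direction and stays internal to the spectral sequence: it is the standard Toda-style Massey-product differential (the same mechanism as the paper's \S 3 computation $d_{p-1}(x_{p-1})\doteq\toda{a,d_1(a_{p-1}),\dots,d_1(a_{p-1})}$, following \cite{GreenBook}). Since $d_9(\Delta)=ab^4$ and $a^2=0$ forces $a\Delta^4$ to survive past $E_9$, one gets on the spectral sequence pages
\[
d_{33}([a\Delta^4])\doteq\toda{a,\underbrace{ab^4,\dots,ab^4}_4}\supseteq\toda{a,a,a,a,a}\,b^{16}=b^{17},
\]
which is precisely the formula Hill records in the parallel $V(0)$ and $V(1)$ subsections. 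This shuffle also settles, with a unit coefficient, the nontriviality question that you flagged but left as an unresolved ``obstacle''; your fallback of identifying the target purely by sparsity in bidegree $(34,680)$ would at best show what the differential hits \emph{if} it is nonzero, not that it is nonzero. Your degree bookkeeping ($a\Delta^4$ in filtration $1$, $b^{17}$ in filtration $34$, hence $d_{33}$) is correct, but the proof needs to be reorganized around the algebraic Massey product of the $d_9$-differentials rather than a homotopy Toda bracket representing the source.
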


The spectral sequence collapses at this point, as there are not
enough things in higher filtration to be the target of any further
differentials.

\begin{cor}
As an algebra,
\[
\pi_\ast(EO_4)/\mathfrak m\cong \F_5[\Delta]\otimes P,
\]
where $P$ is a Poincar\'e duality algebra of dimension $|b^{16}|=608$.
\end{cor}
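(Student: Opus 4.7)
The plan is to run the Adams-Novikov spectral sequence modulo $\mathfrak{m}$, compute $E_\infty/\mathfrak{m}$ from the two differentials of this section, and then identify the resulting ring with $\F_5[\Delta]\otimes P$ via the hidden multiplicative extensions already recorded.

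By the preceding theorem, $H^*(A,\Gamma) = H^0(A,\Gamma)[a,b]/(a^2,\mathfrak{m}(a,b))$, and since $\mathfrak{m}$ annihilates both $a$ and $b$, the starting page is $E_2/\mathfrak{m} = \F_5[\Delta,a,b]/(a^2)$. Propagating $d_9(\Delta) = ab^4$ by the Leibniz rule gives $d_9(\Delta^n) = n\Delta^{n-1}ab^4$, which in characteristic $5$ kills every $\Delta^n$ with $n\not\equiv 0\pmod 5$ and every $ab^k$ with $k \ge 4$. Together with the preceding proposition, which identifies $[a\Delta], [a\Delta^2], [a\Delta^3]$ as Massey products and hence as permanent cycles, this shows that $E_{10}/\mathfrak{m}$ has zero-line $\F_5[\Delta^5]$, one-line generated over $\F_5[\Delta^5]$ by $\{a, [a\Delta], [a\Delta^2], [a\Delta^3], [a\Delta^4]\}$, and the $b$-columns intact. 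Applying $d_{33}([a\Delta^4]) = b^{17}$ and propagating by multiplication with $b$ and with the permanent cycle $\Delta^5$ kills every $b^j$ with $j \ge 17$ and every $\Delta^{5m}[a\Delta^4]$; a sparseness argument over the surviving bidegrees then forces $E_\infty = E_{34}$ in the $\mathfrak{m}$-quotient.

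The resulting $\F_5[\Delta^5]$-basis of $E_\infty/\mathfrak{m}$ is $\{b^j : 0 \le j \le 16\} \cup \{[a\Delta^i]b^k : 0 \le i \le 3,\ 0 \le k \le 3\}$, under the convention $[a\Delta^0] = a$. The hidden extensions $[a\Delta^i][a\Delta^j] = b^{13}$ for $i+j = 3$ (and zero for other sums), together with $a^2 = 0$ and the evident $b$-multiplication, determine the ring structure. Letting $P$ be the finite $\F_5$-subalgebra spanned by this basis, and renaming $\Delta^5 \mapsto \Delta$ to match the statement, gives $\pi_*(EO_4)/\mathfrak{m} = \F_5[\Delta]\otimes P$.

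Poincar\'e duality of $P$ is then verified against the top class $b^{16}$ in topological degree $16\cdot 38 = 608$: the pairings $b^k\cdot b^{16-k} = b^{16}$ and $[a\Delta^i]b^k\cdot [a\Delta^{3-i}]b^{3-k} = b^{13}\cdot b^3 = b^{16}$ are nondegenerate on the respective homogeneous pieces. The main obstacle is the sparseness check excluding any Adams-Novikov differential in the $\mathfrak{m}$-quotient other than $d_9$ and $d_{33}$; once that is in place, the algebra structure follows mechanically from the Massey-product and hidden-extension data of the preceding propositions.
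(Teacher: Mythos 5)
Your proposal is correct and follows essentially the same route as the paper, which states this corollary without a separate proof precisely because it is the assembly you carry out: the $d_9(\Delta)=ab^4$ differential propagated by the Leibniz rule in characteristic $5$, the Massey-product classes $[a\Delta^i]$ surviving to $E_{10}$, the $d_{33}([a\Delta^4])\doteq b^{17}$ differential, collapse for sparseness reasons (after $E_{34}$ nothing survives in filtration above $32=|b^{16}|_{\mathrm{filt}}$, so no targets remain), and the hidden extensions $[a\Delta^i][a\Delta^j]=b^{13}$ for $i+j=3$ furnishing the perfect pairing on $P$ with top class $b^{16}$ in degree $608$. Your explicit basis $\{b^j: 0\le j\le 16\}\cup\{[a\Delta^i]b^k: 0\le i,k\le 3\}$ and the renaming of the periodicity class $\Delta^5$ (of degree $800$) as the polynomial generator are the intended reading of the paper's statement, so there is nothing to add beyond noting that your intermediate phrase ``the $b$-columns intact'' at $E_{10}$ should be read as excluding the towers $[a\Delta^i]b^{\ge 4}$ for $i\le 3$, which your final basis correctly omits.
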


\begin{remark}
Hopkins and Miller have shown that the differentials in this section are basically the same for all $p$. As a consequence, the multiplicative extensions we derived and the decomposition of $\pi_\ast(EO_{p-1})/\mathfrak m$ into a polynomial algebra and a Poincar\'e duality algebra are similar for all primes. This is an incarnation of Gross-Hopkins and Mahowald-Rezk dualities \cite{GrossHopkinsDuality, MahowaldRezkFP}, similar to what is seen in the homotopy of $tmf$ at $p=2$ and $p=3$.
\end{remark}

The differentials for $eo_4$ imply the bulk of those for the $V(0)$ and $V(1)$ homologies.

\subsection{The $V(0)$-homology of $eo_4$}
Since the Adams-Novikov spectral sequence for the $V(0)$-homology is a module over the Adams-Novikov spectral sequence for the homotopy, we know that $ab^4$ and $b^{17}$ must be zero by the $E_\infty$ page. The source of the differential which truncates these $b$- towers must be $b$-torsion free. The only classes which are $b$-torsion free on the $E_2$ page are $1$, $a$, $x_1$, $a_1x_1$, and $\Delta$. For degree reasons, $a$, $x_1$, and $a_1x_1$ are permanent cycles. Thus the primary differential is the one on $\Delta$ from the homotopy.

\begin{lem}
We have a $d_9$-differential
\[
d_9(\Delta)=a b^4.
\]
\end{lem}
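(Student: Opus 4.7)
The plan is naturality. The unit map $S^0 \to V(0)$ smashes with $eo_4$ to give a ring map $eo_4 \to eo_4 \wedge V(0)$, hence a morphism of Adams-Novikov spectral sequences whose effect on $E_2$-pages is the reduction $H^\ast(A,\Gamma) \to H^\ast(A/I_0,\Gamma/I_0)$. Since $\Delta$, $a$, and $b$ are polynomial expressions in $a_2, a_3, a_4, r$ with no $a_1$, they map to the classes of the same names in the $V(0)$-homology spectral sequence. The theorem $d_9(\Delta) = ab^4$ in the Adams-Novikov spectral sequence for $\pi_\ast eo_4$, proved just above, therefore transports across this morphism to yield the desired differential.

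The only nonformal check is that the image of $ab^4$ is still nonzero on the $E_9$-page of the $V(0)$-spectral sequence, since otherwise naturality would only produce the vacuous $d_9(\Delta) = 0$. On $E_2$, $a$ and $b$ appear as free exterior and polynomial generators over $H^0(A/I_0,\Gamma/I_0)$, so $ab^4$ is a nonzero monomial there. Both $a$ and $b$ are permanent cycles --- $a$ by the same degree argument already used for it in the paragraph preceding the lemma, and $b$ because it detects $\beta_1$, which remains nonzero in $V(0)_\ast S^0$ and a fortiori in $V(0)_\ast eo_4$. Hence $ab^4 \ne 0$ through $E_9$, and the transported differential is a genuine equation.

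The main obstacle, insofar as there is one, is merely bookkeeping: one must verify that the elements called $\Delta$ in Theorem~\ref{Thm:MainThm} and in Section~\ref{sec:V0eo4}, and similarly $a$ and $b$ as fixed in Section~\ref{sec:AllPrimes}, are compatible under the reduction map $A \to A/I_0$. This is immediate from the explicit polynomial formulas recorded in Table~\ref{Tab:GenRel} and the bar-complex representatives noted earlier, so once these identifications are recorded no new computation is required and the proof reduces to invoking the differential from the preceding subsection.
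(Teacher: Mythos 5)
Your core move---pushing the homotopy differential $d_9(\Delta)=ab^4$ forward along the map of Adams--Novikov spectral sequences induced by $eo_4\to eo_4\wedge V(0)$---is in substance the paper's own argument: the paper phrases it via the module structure of the $V(0)$ spectral sequence over the one for $\pi_\ast eo_4$, deduces that the tower $ab^{4+k}$ must be truncated, observes that the only $b$-torsion-free classes on $E_2$ are $1$, $a$, $x_1$, $a_1x_1$, and $\Delta$, rules out all but $\Delta$ for degree reasons, and concludes that the differential is ``the one on $\Delta$ from the homotopy.'' Your identification of the $E_2$-page map with reduction modulo $I_0=(5)$, and of the classes $a$, $b$, $\Delta$ under it, is correct, and naturality alone does yield the displayed equation with $ab^4$ read as its image class on $E_9$.

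However, the one step you yourself isolate as nonformal---that $ab^4$ is still nonzero on $E_9$, without which the transported equation is vacuous and the later $d_{33}$-differentials collapse---is exactly the step whose justification fails, in both halves. That $a$ and $b$ are permanent cycles only says they support no differentials; it does nothing to rule out $ab^4$ being the \emph{target} of some $d_r$ with $r\le 8$, which is what survival to $E_9$ actually requires. And the inference that $\beta_1$ is nonzero in $V(0)_\ast S^0$ ``and a fortiori in $V(0)_\ast eo_4$'' runs the wrong way: the unit map can kill classes, and indeed the death of $\alpha_1\beta_1^4$ under this very map is the content of the lemma, so nonvanishing cannot be imported from the sphere. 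Fortunately the needed fact has a one-line proof: every class in this spectral sequence has internal degree divisible by $2(p-1)=8$ (all generators $a_i$, $r$, hence $a$, $b$, $x_1$, $\Delta$ do), and $d_r$ raises $t$ by $r-1$, so the only differentials that can be nonzero are the $d_r$ with $r\equiv 1\pmod 8$---consistent with the $d_9$, $d_{17}$, $d_{25}$, and $d_{33}$ appearing in \S\ref{sec:AdamsDiff}. In particular nothing can hit $ab^4$ (nor emanate from $\Delta$) before the $E_9$-page, and $ab^4$ is nonzero on the $E_2$-page computed in \S\ref{sec:V0eo4}. With that substitution your proof is complete and agrees with the paper's; the paper's exclusion argument buys the same conclusion wholesale by showing $\Delta$ is the only possible source of any truncating differential, without needing the explicit parity check.
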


This yields two additional differentials.

\begin{cor}
We have two $d_{33}$-differentials
\[
d_{33}(a_1\Delta^4)=\toda{a_1,ab^4,ab^4,ab^4,ab^4}=x_1b^{16}
\]
and
\[
d_{33}(a\Delta^4)=\toda{a,ab^4,ab^4,ab^4,ab^4}=b^{17}.
\]
\end{cor}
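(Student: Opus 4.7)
The plan is to obtain both $d_{33}$-differentials by iterating the homotopy differential $d_9(\Delta)=ab^4$ through a Moss/May Massey product argument, and then to evaluate the resulting brackets by shuffling.

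First I would check that the two classes $a_1\Delta^4$ and $a\Delta^4$ do survive to $E_{10}$: we have $d_9(a_1\Delta^4)=4a_1\Delta^3(ab^4)$ and $d_9(a\Delta^4)=4\Delta^3(a\cdot ab^4)$, and both vanish because $a^2=0$ and because $a_1a=0$ in $H^\ast(A/I_0,\Gamma/I_0)$ (this last relation is forced by the $d_1$-differential $d_1(a_2)=-a_1a$ in the $V(0)$ Bockstein computed in Section~\ref{sec:V0eo4}). These same vanishings ensure that the Massey products $\toda{a_1,ab^4,ab^4,ab^4,ab^4}$ and $\toda{a,ab^4,ab^4,ab^4,ab^4}$ are defined.

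Next I would invoke the same ``Toda style argument'' used in the preceding proposition for $d_{33}([a\Delta^4])=b^{17}$ in the sphere case. Since $d_9(\Delta)=ab^4$ and $\Delta^4$ is otherwise $d_9$-exact after multiplying by $a_1$ or $a$, May's higher-differential formula for Massey products (applied fourfold to iterate $d_9$) yields, modulo indeterminacy,
\[
d_{33}(a_1\Delta^4)\doteq\toda{a_1,ab^4,ab^4,ab^4,ab^4},\qquad d_{33}(a\Delta^4)\doteq\toda{a,ab^4,ab^4,ab^4,ab^4}.
\]
The indeterminacy in each bracket lies in higher filtration than the asserted target, so it contributes nothing new to the differential.

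Finally I would evaluate the brackets. Since $b^4$ is a central permanent cycle, standard shuffling in matric Massey products lets me extract it from the interior entries:
\[
\toda{a,ab^4,ab^4,ab^4,ab^4}=b^{16}\toda{a,a,a,a,a}=b^{16}\cdot b=b^{17},
\]
using the bar-complex description $b=\toda{a,a,a,a,a}$ recorded in Section~\ref{sec:AllPrimes}. The same shuffle gives
\[
\toda{a_1,ab^4,ab^4,ab^4,ab^4}=b^{16}\toda{a_1,a,a,a,a}=b^{16}\cdot x_1,
\]
using the Massey-product expression $x_1=\toda{a_1,a,a,a,a}$ noted at the end of Section~\ref{sec:V0eo4}. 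The main technical obstacle is the careful bookkeeping in step two, i.e.\ confirming that the fivefold iteration of the Moss/May machine lands on page $33$ (rather than some earlier page) and that the indeterminacy of the Massey bracket is strictly subordinate to the claimed target; both follow from a direct degree count combined with the sparsity of the $E_{10}$-page displayed in the preceding figures.
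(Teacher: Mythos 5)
Your proposal is correct and takes essentially the same route as the paper, which derives both $d_{33}$-differentials from the $d_9$-differential $d_9(\Delta)=ab^4$ by exactly this Toda-style Massey-product iteration, evaluating the brackets via $b=\toda{a,a,a,a,a}$ and $x_1=\toda{a_1,a,a,a,a}$ just as you do. One trivial slip: the iteration of the Moss/May machinery is fourfold (four copies of $ab^4$, giving $33=1+4\cdot 8$), as you correctly say earlier, not fivefold as stated in your closing sentence; this does not affect the argument.
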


\subsection{The $V(1)$-homology of $eo_4$}

A similar argument to the $V(0)$ case applies here. The class $ab^4$ and $b^{17}$ must be zero. For degree reasons, we again see that there is only one possible differential realizing this: the differential from the sphere.

\begin{lem}
We again have a $d_9$-differential
\[
d_9(\Delta)=ab^4.
\]
\end{lem}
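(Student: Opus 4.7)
The plan is to adapt the $V(0)$-case argument verbatim, leveraging the module structure of the Adams--Novikov spectral sequence for $V(1)_\ast eo_4$ over the one computing $\pi_\ast eo_4$. The unit map $eo_4 \to eo_4 \wedge V(1)$ induces a map of spectral sequences that sends the classes $\Delta$, $a$, and $b$ to the classes of the same names in $H^\ast(A/I_1,\Gamma/I_1)$. Since these target classes are nonzero by the explicit description in Theorem~\ref{thm:V1eo4}, the differential $d_9(\Delta) = ab^4$ already established in $\pi_\ast eo_4$ propagates directly, giving the claim.

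Equivalently, one can mimic the $V(0)$ reasoning from scratch. The Toda relation $\alpha_1\beta_1^5 = 0$ in $\pi_\ast S^0$, together with the identifications of $a$ and $b$ as images of $\alpha_1$ and $\beta_1$ under the unit, forces $ab^5 = 0$ in $V(1)_\ast eo_4$, so some differential must truncate the $b$-tower through $ab^4$. Its source must be $b$-torsion-free, so I would scan the algebra structure in Theorem~\ref{thm:V1eo4} and rule out each $b$-torsion-free generator other than $\Delta$: the classes $a$ and $x_1$ are permanent cycles (they represent $\alpha_1$ and $h_{1,1}$), while $a_2$ and $[a_3^2]$ lie in the wrong tridegree to produce $ab^4$. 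This singles out $\Delta$ as the only possibility, and matching bidegrees pins down the length of the differential as $9$.

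The main obstacle — really more bookkeeping than a genuine difficulty — is verifying that neither $\Delta$ nor $ab^4$ can participate in a $d_r$ for $r < 9$ on the $V(1)$ page. Given the sparseness visible in Figure~\ref{fig:a2E3}, this reduces to a short inspection of what sits in the relevant internal degrees, after which no shorter differential is available and $d_9(\Delta) = ab^4$ is the only option consistent with the forced vanishing $ab^5 = 0$.
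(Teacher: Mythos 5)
Your proposal is correct and takes essentially the same route as the paper, which likewise transports the $d_9$ from the spectral sequence for $\pi_\ast eo_4$ (``the differential from the sphere'') via the module structure, exactly as in the $V(0)$ case, and invokes degree reasons to see this is the only differential that can kill $ab^4$ and $b^{17}$. Your explicit check that $\Delta$ and $ab^4$ survive to the $E_9$-page is precisely the ``for degree reasons'' step the paper leaves implicit, so nothing is missing.
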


The Massey products described in Lemma~\ref{lem:V1MasseyProducts} give slight generalizations of the Toda implied differentials.

\begin{prop}
We have a $d_{17}$-differential
\[
d_{17}(x_1\Delta^2)\doteq\toda{x_1,ab^4,ab^4}=a_2b^9.
\]
\end{prop}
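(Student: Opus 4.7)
The plan is to derive the $d_{17}$-differential from the already-established $d_9$-differential $d_9(\Delta)=ab^4$ via a standard Moss/Toda-style Massey product argument, and then simplify the resulting bracket using the identity in Lemma~\ref{lem:V1MasseyProducts}.

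First I would check that $x_1\Delta^2$ survives long enough. By the Leibniz rule,
\[
d_9(x_1\Delta^2)=2x_1\Delta\cdot ab^4=2\Delta b^4\cdot(ax_1)=0,
\]
since $ax_1=0$ is one of the relations recorded in Theorem~\ref{thm:V1eo4}. A glance at the $E_2$-page (Figure~\ref{fig:a2E3}, extended $\Delta$-periodically) shows that the first possible later differential on $x_1\Delta^2$ lands in filtration $18$, i.e.\ is a $d_{17}$.

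Next I would set up the Moss identification. Both $x_1\cdot ab^4=0$ (by $ax_1=0$) and $(ab^4)^2=0$ (by $a^2=0$) hold identically on $E_2$, so the triple Massey product $\langle x_1,ab^4,ab^4\rangle$ is defined. The differential $d_9(\Delta)=ab^4$ exhibits $\Delta$ as an explicit cochain-level bounding class for $ab^4$, and plugging $\Delta$ into the two defining cochains of the bracket gives $x_1\Delta^2$ (modulo higher filtration) as a representative. This is the standard Moss/Kochman argument that the first nontrivial differential on $x_1\Delta^2$ is detected by $\langle x_1,ab^4,ab^4\rangle$, establishing the first equality of the statement.

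Finally, I would simplify the bracket by juggling. Since $b$ is a central permanent cycle, the standard shuffling identities for triple Massey products give
\[
\langle x_1,ab^4,ab^4\rangle=\langle x_1,a,ab^8\rangle=\langle x_1,a,a\rangle\cdot b^8,
\]
and Lemma~\ref{lem:V1MasseyProducts} identifies $\langle x_1,a,a\rangle=a_2b$, yielding $a_2 b^9$. The indeterminacy $x_1\cdot H^{\ast}+H^{\ast}\cdot ab^4$ in the relevant tridegree is empty by the sparsity visible in Figure~\ref{fig:a2E3}, so the equality is literal.

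The main obstacle is the second step: one must verify carefully that the cochain-level bounding class furnished by $\Delta$ really does compute the bracket $\langle x_1,ab^4,ab^4\rangle$ modulo the indeterminacy at the relevant filtration, rather than a shifted or corrupted version. Once that matching is in hand, the juggling and application of Lemma~\ref{lem:V1MasseyProducts} are entirely routine.
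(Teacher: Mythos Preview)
Your proposal is correct and follows exactly the approach the paper intends: the paper does not spell out a proof but simply remarks that the Massey products of Lemma~\ref{lem:V1MasseyProducts} yield ``slight generalizations of the Toda implied differentials,'' and your Moss/Toda argument together with the juggling down to $\langle x_1,a,a\rangle\cdot b^8=a_2b^9$ is precisely that intended derivation made explicit.
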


\begin{prop}
We have a $d_{25}$-differential
\[
d_{25}(a_2\Delta^3)\doteq\toda{a_2,ab^4,ab^4,ab^4}=x_1b^{12}.
\]
\end{prop}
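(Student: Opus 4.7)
The plan is to mimic the preceding $d_{17}$-derivation for $x_1\Delta^2$, but promoted to a fourfold Massey product. Since $d_9(\Delta)=ab^4$ is established, the idea is that a class of the form $x\cdot\Delta^k$ should carry a higher differential detected by the Massey product $\langle x,ab^4,\ldots,ab^4\rangle$ with $k$ copies of $ab^4$, by the standard Moss/May convergence of Massey products to higher Adams--Novikov differentials. With $x=a_2$ and $k=3$, this yields the $d_{25}$ claimed.

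First, I would verify that the bracket $\langle a_2,ab^4,ab^4,ab^4\rangle$ is defined on the page in question. Consecutive products both vanish: $a_2\cdot ab^4 = 0$ by the relation $a(a_2,[a_3^5])=0$ in $H^*(A/I_1,\Gamma/I_1)$ from Theorem~\ref{thm:V1eo4}, and $(ab^4)^2 = a^2 b^8 = 0$ because $a^2=0$. Hence the fourfold bracket is defined modulo its indeterminacy, which will be absorbed by the $\doteq$ in the final statement.

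Next, I would evaluate the bracket. The Massey product description of Lemma~\ref{lem:V1MasseyProducts} gives $x_1 = \toda{a_2,a,a,a}$. Because $b$ is central and a permanent cycle, the standard shuffling rules \cite{MayMatricMassey} allow one factor of $b^4$ to be absorbed into each of the three $a$-slots, so that
\[
\toda{a_2,ab^4,ab^4,ab^4}\doteq b^{12}\toda{a_2,a,a,a} = x_1 b^{12}.
\]
A bidegree check confirms this is consistent with a $d_{25}$: the source $a_2\Delta^3$ lives in $(s,t)=(0,496)$, and $x_1 b^{12}$ lives in $(25,520) = (0+25,\,496+24)$, matching the Adams--Novikov convention $d_r\colon(s,t)\to(s+r,t+r-1)$.

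Finally, I would invoke the Moss/May convergence theorem to turn the bracket identity into the asserted differential: since $\Delta$ is the unique carrier of the $d_9$-differential onto $ab^4$, the Massey product $\toda{a_2,ab^4,ab^4,ab^4}$ detects $d_{25}(a_2\Delta^3)$ provided no shorter differential off $a_2\Delta^3$ intervenes. The main obstacle is precisely this latter check, which amounts to a tridegree inspection: one must rule out any class in cohomological degree $s$ with $0<s<25$ and the appropriate internal dimension that could serve as the target of a $d_s$ on $a_2\Delta^3$. For the ranges in question the sparsity of the $V(1)$-cohomology established in the preceding sections makes this a routine exclusion rather than a new algebraic input.
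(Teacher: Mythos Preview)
Your argument is correct and follows the same approach the paper indicates: the paper's only justification is the sentence ``The Massey products described in Lemma~\ref{lem:V1MasseyProducts} give slight generalizations of the Toda implied differentials,'' and you have simply fleshed out what that sentence means---using $x_1=\toda{a_2,a,a,a}$ together with the $d_9$ on $\Delta$ to identify the bracket, and invoking the standard Toda/Moss mechanism plus a sparsity check to produce the $d_{25}$. Your bidegree and definedness checks are accurate, so there is nothing substantive to add.
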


Finally, there is the usual Toda differential
\begin{cor}
There is a $d_{33}$-differential
\[
d_{33}(a\Delta^4)\doteq b^{17}.
\]
\end{cor}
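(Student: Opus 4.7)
The plan is to transport the Toda-style argument used earlier to establish $d_{33}([a\Delta^4])\doteq b^{17}$ in the homotopy Adams-Novikov spectral sequence for $eo_4$ directly into the $V(1)$-homology setting. Since $V(1)$ is a ring spectrum at $p=5$, the unit map $S^0\to eo_4\wedge V(1)$ is multiplicative, so the Toda relation $\alpha_1\beta_1^5=0$ in the sphere forces $ab^5=0$ here as well; this is the content of the preceding Lemma, which provides the seed differential $d_9(\Delta)=ab^4$.

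With this seed in hand, I would apply Moss's theorem on the convergence of Massey products in spectral sequences, iterating the $d_9$-differential four times to identify $a\Delta^4$ on the $E_{10}$-page with the five-fold bracket
\[
\toda{a,ab^4,ab^4,ab^4,ab^4},
\]
and to conclude that it therefore supports a $d_{33}$-differential whose target is the value of this bracket. The second step is to evaluate the bracket: by May's matric Massey product shuffling identities, one pulls out the common factor $b^{16}$ to reduce to $\toda{a,a,a,a,a}\cdot b^{16}$, and the inner five-fold bracket is the standard bar-complex representative of $b=\beta_1$. This yields $b^{17}$ up to a nonzero scalar, which is exactly what the $\doteq$ convention permits.

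The main obstacle is managing indeterminacies and shuffle signs in the Massey product evaluation. The signs are absorbed by $\doteq$; for the indeterminacy, I would use the sparsity of the $E_{10}$-page, specifically the fact that the relevant bidegree contains only the class $b^{17}$ (modulo filtration) since all $b$-torsion-free generators in lower filtration have been identified in Section~\ref{sec:eo4}. Finally, an inspection of bidegrees confirms that no shorter differential on $a\Delta^4$ is possible between $E_{10}$ and $E_{33}$, so $d_{33}$ is indeed the first nontrivial differential on this class, matching exactly the homotopy computation.
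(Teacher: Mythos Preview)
Your proposal is correct and follows essentially the same approach as the paper: the paper records this corollary as ``the usual Toda differential,'' deferring to the identical $d_{33}$-computation already carried out for $\pi_\ast eo_4$ via the Toda-style iteration of $d_9(\Delta)=ab^4$. Your write-up simply makes explicit the Massey-product mechanics (Moss's theorem, the bracket $\toda{a,ab^4,ab^4,ab^4,ab^4}$, and the shuffle to $b^{17}$) that the paper leaves implicit.
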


Again, for degree reasons, the spectral sequence at this point collapses.

\section{Methods and Formulas Relating the classes $\Delta_i$}\label{sec:relations}
The formulas herein presented are by no means unique. In most cases, we can find a different collection of algebraic generators by modifying any of the given generators by decomposables. The method of finding these is somewhat involved but relatively straightforward. We begin with formulas for the classes $c_i$ at the prime $5$:
\begin{align*}
c_2&=-2a_1^2+5a_2,\\
c_3&=4a_1^3-15a_1a_2+25a_3,\\
c_4&=-3a_1^4+15a_1^2a_2-50a_1a_3+125a_4,\\
c_5&=4a_1^5-25a_1^3a_2+125a_1^2a_3-625a_1a_4+3125a_5.
\end{align*}

We attach a kind of $5$-adic depth to each generator of $H^0(A,\Gamma)$, describing the order of the additive subgroup of $H^0$ generated by that class modulo the ideal generated by $c_2$ through $c_5$. The leading terms coming from the Bockstein spectral sequences come in pairs of the form $a_1^\epsilon a_2^ia_3^ja_4^k$, where $\epsilon$ is $0$ or $1$. The case $\epsilon=0$ is slightly easier. Since $c_n\cong 5^{n-1} a_n$ modulo $a_1$ for $2\leq n\leq 4$, we see that the generator with leading term $a_2^ia_3^ja_4^k$ must be of the form
\[\frac{1}{5^{i+2j+3k}}c_2c_3c_4+\dots.\] The number $i+2j+3k$ is the desired depth. The case of $\epsilon=1$ is similar, and the depth is again $i+2j+3k$.

This notion of depth allows us to simplify our search for generators in a fixed degree. We begin by considering all products of generators of lower degree. Our notion of depth is appropriately additive, in that the depth of $ab$ is that of $a$ plus that of $b$, so we can assign depths to all of these products. We consider the collection of all such products with depth less than the depth of the generator for which we are searching. We then build our new class by finding relations among those classes with the largest depth and working our way down. This greatly speeds the search in practice.

\begin{table}[ht]
\begin{tabular}{ll}
$\Delta_4$ & $\tfrac{1}{25}\big(4c_4+3c_2^2\big)$\\

$\Delta_5$ & $\tfrac{1}{25}\big(2c_5+c_2c_3\big)$ \\

$\Delta_6$ & $\tfrac{1}{125}\big(2c_3^2-4c_2c_4+c_2^3\big)$ \\

$\Delta_7$ & $\tfrac{1}{5}\big(c_3\Delta_4-2c_2\Delta_5\big)$ \\

$\Delta_8$ & $\tfrac{1}{5^5}\big(-3c_2c_3^2+9c_2^2c_4-4c_4^2+3c_3c_5\big)$ \\

$\Delta_9$ &
    $\tfrac{1}{5^5}\big(-9c_3^3+32c_2c_3c_4-9c_2^2c_5+4c_4c_5\big)$ \\

$\Delta_{10}$ &  $\tfrac{1}{200}\big(2\Delta_5^2+c_2\Delta_4^2-15\Delta_4\Delta_6\big)$ \\

$\Delta_{11}$ &
$\tfrac{1}{10}\big(3\Delta_5\Delta_6-\Delta_4\Delta_7\big)$ \\

$\Delta_{12}$ &  $\tfrac{1}{5^8}\big(54c_3^4-279c_2c_3^2c_4+216c_2^2c_4^2-224c_4^3+81c_2^2c_3c_5+144c_3c_4c_5-27c_2c_5^2\big)$  \\

$\Delta_{13}$ & $\tfrac{1}{15}\big(\Delta_4\Delta_9-4\Delta_5\Delta_8\big)$ \\

$\Delta_{14}$ &  $\frac{1}{50}\big(4c_4\Delta_{10}-6c_3\Delta_{11}+30\Delta_6\Delta_8-15\Delta_4\Delta_{10}+15c_2\Delta_{12}\big)$ \\

$\Delta_{15}'$ & $\tfrac{1}{5}\big(\Delta_5\Delta_{10}-2\Delta_4\Delta_{11}\big)$ \\

$\Delta_{15}$ & $\tfrac{1}{25}\big(2\Delta_6\Delta_9-\Delta_7\Delta_8+5\Delta_{15}'-15c_2\Delta_{13}\big)$ \\

$\Delta_{16}$ &  $\tfrac{1}{25}\big(-4\Delta_5\Delta_{11}-c_2\Delta_4\Delta_{10}+15\Delta_6\Delta_{10}-15c_2\Delta_{14}\big)$ \\

$\Delta_{17}$ &  $\tfrac{1}{25}\big(-3c_3\Delta_{14}-2c_2\Delta_{15}'+20\Delta_8\Delta_9\big)$ \\

$\Delta_{18}$ &  $\tfrac{1}{5}\big(2\Delta_5\Delta_{13}-\Delta_4^2\Delta_{10}+2\Delta_4\Delta_6\Delta_8\big)$ \\

$\Delta_{18'}$ &  $\tfrac{1}{25}\big(2\Delta_9^2+16c_2\Delta_8^2-95\Delta_8\Delta_{10}\big)$ \\

$\Delta_{19}$ & $\tfrac{1}{5}\big(8\Delta_8\Delta_{11}-\Delta_9\Delta_{10}\big)$ \\

$\Delta_{21}$ & $\tfrac{1}{25}\big(2c_3\Delta_{18}'+12 c_2\Delta_{19}-30\Delta_{10}\Delta_{11}+15\Delta_9\Delta_{12}\big)$ \\

$\Delta_{22}$ & $\tfrac{1}{25}\big(c_3\Delta_{19}+c_4\Delta_{18}'+10\Delta_9\Delta_{13}-5\Delta_{11}^2\big)$ \\

$\Delta$ & \begin{tabular}{l}
    $\frac{1}{5^{15}}\big(-100c_2^3c_3^2c_4^2-135c_3^4c_4^2+400c_2^4c_4^3+720c_2c_3^2c_4^3$ \\
    $-640c_2^2c_4^4+256c_4^5+80c_2^3c_3^3c_5+108c_3^5c_5-360c_2^4c_3c_4c_5$ \\
    $-630c_2c_3^3c_4c_5+560c_2^2c_3c_4^2c_5-320c_3c_4^3c_5+108c_2^5c_5^2$ \\
    $+165c_2^2c_3^2c_5^2-180c_2^3c_4c_5^2+90c_3^2c_4c_5^2+80c_2c_4^2c_5^2$ \\
    $-30c_2c_3c_5^3+c_5^4\big)$
    \end{tabular}

\end{tabular}
\caption{Generators and Basic Relations for $H^0(A,\Gamma)$}
\label{Tab:GenRel}
\end{table}

\section{Relationship with $EO_{p-1}$}\label{sec:EOp-1Intro}
Manin showed that the Jacobian of the Artin-Schreier curve over $\F_p$
\[
y^{p-1}=x^p-x
\]
admits a one dimensional formal summand of height $n=(p-1)$ \cite{ManinFG}. Since this is the first height at which the Morava stabilizer group has elements of order $p$, Hopkins, Mahowald, and Gorbounov used this fact to build a geometric model analogous to the story of elliptic curves and $tmf$ at the prime $3$, and they show that the formal completion of the Jacobians of the family of curves given by Equation~(\ref{eqn:DefEqn}) over the Witt vectors of $\F_{p^n}$ has a summand which is the Lubin-Tate universal deformation of the Honda formal group \cite{GorMah}.

There are three moduli problems associated with height $p-1$, all of which are related to the curves described by Equation~(\ref{eqn:DefEqn}). We will first describe the full moduli stack, $\mathcal M_{p-1}$, as this is closely related to the computation. For reference and completeness, we will also describe an open substack corresponding to smooth curves and a formal neighborhood of a point in this curve. Historically, the analysis has been in the reverse order. The Hopkins-Gorbounov-Mahowald result concerns itself mainly with the formal neighborhood, and the extension over the entire moduli stack is ongoing work.

The main moduli stack in question represents curves which can (flat--)locally be described by Equation~(\ref{eqn:DefEqn}), together with their automorphisms which fix the point at infinity. This should be thought of as a generalization of the Deligne-Mumford compactification of the moduli stack of elliptic curves to heights greater than two. Since the stack is determined by local data, and since this is determined by the coefficients in Equation~(\ref{eqn:DefEqn}), this moduli stack is the stack associated to the (ungraded) Hopf algebroid
\[
(\hat{A},\hat{\Gamma})=(\Z_p[a_1,\dots,a_p],\hat{A}[r,\lambda^{\pm 1}]).
\]
The $\G_m$ action on the curves amounts to a grading on the Hopf algebroid, recovered by viewing Equation~(\ref{eqn:DefEqn}) as a homogeneous equation of degree $2p(p-1)$. When we pass to the graded version, we recover $(A,\Gamma)$.

This moduli stack has an open substack $\mathcal M_{p-1}[\Delta^{-1}]$ defined by adding the requirement that the curves be smooth. As was noted earlier, a curve defined by Equation~(\ref{eqn:DefEqn}) is smooth if and only if $\Delta$ is invertible. This means that this moduli stack is the stack associated to the Hopf algebroid $(A[\Delta^{-1}], \Gamma[\Delta^{-1}])$.

We can associate to each curve defined by Equation~(\ref{eqn:DefEqn}) its Jacobian, and the scaling action splits off a summand of the $p$-divisible group. This family of $p$-divisible groups is the essential data needed to apply Lurie's derived algebraic geometry version of Artin representability \cite{BehrensLawsonTAF} and thereby to produce a sheaf of $E_\infty$ ring spectra over this moduli stack with global sections defined to be $eo_{p-1}[\Delta^{-1}]$. The Adams-Novikov $E_2$-term for the homotopy of $eo_{p-1}[\Delta^{-1}]$ is given by the cohomology of the Hopf algebroid $(A[\Delta^{-1}],\Gamma[\Delta^{-1}])$. Since $\Delta$ is an invariant element of $A$ and since $\Delta$ acts faithfully on the cohomology of $(A,\Gamma)$, this cohomology is easy to compute from our results.

\begin{thm}
As an algebra,
\[
\Ext_{(A[\Delta^{-1}],\Gamma[\Delta^{-1}])}(A[\Delta^{-1}],A[\Delta^{-1}])=\Ext_{(A,\Gamma)}(A,A)[\Delta^{-1}],
\]
and for $0\leq n\leq 3$, we also have
\[
\Ext_{(A[\Delta^{-1}]/I_n, \Gamma[\Delta^{-1}]/I_n)}(A[\Delta^{-1}]/I_n, A[\Delta^{-1}]/I_n)= \Ext_{(A/I_n, \Gamma/I_n)}(A/I_n,A/I_n)[\Delta^{-1}].
\]
\end{thm}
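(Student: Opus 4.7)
The plan is to reduce both equalities to the elementary principle that inverting an invariant element commutes with the cobar construction, and that flat localization commutes with cohomology. I would organize the argument in three brief steps.

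First I would verify that $(A[\Delta^{-1}],\Gamma[\Delta^{-1}])$ is actually a Hopf algebroid. Since $\Delta$ is invariant, meaning $\eta_L(\Delta)=\eta_R(\Delta)$ in $\Gamma$, both unit maps extend uniquely to $A[\Delta^{-1}]\to \Gamma[\Delta^{-1}]$, and the coproduct, counit, and antipode extend as well. This step also records the flatness we will need: $A[\Delta^{-1}]$ is flat over $A$, and therefore $\Gamma[\Delta^{-1}]$ is flat over $\Gamma$.

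Second, let $C^*(A,\Gamma)$ denote the cobar complex. I would identify the cobar complex of the localized Hopf algebroid with the termwise localization of $C^*(A,\Gamma)$, using that tensor products commute with localization: in degree $n$,
\[
\Gamma[\Delta^{-1}]^{\otimes_{A[\Delta^{-1}]} n} \cong \Gamma^{\otimes_A n}[\Delta^{-1}].
\]
Flatness then lets cohomology pass through $(-)[\Delta^{-1}]$, giving
\begin{align*}
\Ext_{(A[\Delta^{-1}],\Gamma[\Delta^{-1}])}(A[\Delta^{-1}],A[\Delta^{-1}]) &= H^*\bigl(C^*(A,\Gamma)[\Delta^{-1}]\bigr) \\
&\cong H^*\bigl(C^*(A,\Gamma)\bigr)[\Delta^{-1}],
\end{align*}
which is the first claim.

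For the second statement I would run the same three-step argument verbatim with $(A/I_n,\Gamma/I_n)$ in place of $(A,\Gamma)$. The restriction $0\leq n\leq 3$ is exactly the range in which $I_n$ is invariant and properly contained in $I_{p-1}=I_4$, so the quotient Hopf algebroid is defined, and $\Delta$ descends to a nonzero invariant element of $A/I_n$---the Bockstein computations of the preceding sections (which identify the leading terms of $\Delta$ modulo each $I_k$, for instance $a_4^5$ modulo $I_2$) confirm that $\Delta$ survives in each quotient. The only item requiring care is the bookkeeping in the first step, ensuring that localization respects the Hopf algebroid structure; everything else is formal, and no serious obstacle arises.
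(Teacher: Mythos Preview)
Your argument is correct and is in fact the standard, general way to prove this: for any invariant element $\Delta$ in a flat Hopf algebroid, the cobar complex of the localized Hopf algebroid is the localization of the cobar complex, and exactness of localization finishes the job. No computational input is needed beyond the invariance of $\Delta$ (and, for the second part, that $\Delta$ survives in $A/I_n$).

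The paper takes a different route. Rather than arguing abstractly at the level of the cobar complex, it invokes the explicit Bockstein spectral sequences computed in \S\S\ref{sec:AllPrimes}--\ref{sec:eo4} and observes that multiplication by $\Delta$ is faithful (i.e., torsion-free) on every page of each one; from this it concludes that localization at $\Delta$ commutes with each spectral sequence and hence with the resulting $\Ext$. Your approach is more elementary and more general, and it proves exactly the stated theorem without leaning on any of the earlier computations. The paper's approach, by contrast, extracts an additional dividend: knowing that $\Delta$ acts without torsion on $\Ext_{(A,\Gamma)}(A,A)$ shows that the natural map $\Ext_{(A,\Gamma)}(A,A)\to\Ext_{(A,\Gamma)}(A,A)[\Delta^{-1}]$ is injective, a fact the paper uses immediately afterward to deduce that $\pi_\ast eo_{p-1}\to\pi_\ast eo_{p-1}[\Delta^{-1}]$ is an injection. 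Your argument does not supply this injectivity, though it could easily be supplemented by the observation from the computations that $\Ext$ has no $\Delta$-torsion.
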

\begin{proof}
In all of the above spectral sequences, multiplication by $\Delta$ is faithful on every page. We conclude that for all of the spectral sequences the map from $\Ext$ to the localized $\Ext$ is flat, and the result follows.
\end{proof}

We conclude moreover that the map on homotopy induced by $eo_{p-1}\to eo_{p-1}[\Delta^{-1}]$ is simply inverting $\Delta$ and is an injection. Thus all of the surprising multiplicative relations we saw in the homotopy of $eo_{p-1}$ persist to $eo_{p-1}[\Delta^{-1}]$.

Finally inside this open substack is a point corresponding to the curve $y^{p-1}=x^p-x$ considered by Manin. The formal neighborhood of this point in the stack is the stack defined by the Hopf algebroid
\[
(A[\Delta^{-1}]_{I}^{\wedge},\Gamma[\Delta^{-1}]_{I}^{\wedge}),
\]
where $I$ is the kernel of the map $A\to \F_p$ defining the curve in question. This is the moduli problem originally considered by Hopkins-Gorbounov-Mahowald. The Jacobian of the universal curve over the formal neighborhood has a one-dimensional formal summand isomorphic to the universal deformation of the Honda formal group law, and the Hopkins-Miller theorem then produces a sheaf of $E_\infty$-ring spectra over this formal moduli stack, the global sections of which are the Hopkins-Miller real $K$-theory spectrum $EO_{p-1}$. Just as with the passage to the smooth substack, our knowledge of the cohomology of $(A,\Gamma)$ produces for us an understanding of the cohomology of the localized, completed Hopf algebroid.

\begin{thm}
As algebras,
\[
\Ext_{(A[\Delta^{-1}]_{I}^{\wedge},\Gamma[\Delta^{-1}]_{I}^{\wedge})}(A[\Delta^{-1}]_{I}^{\wedge},A[\Delta^{-1}]_{I}^{\wedge})=\Ext_{(A,\Gamma)}(A,A)[\Delta^{-1}]_{\mathfrak m}^{\wedge}.
\]
\end{thm}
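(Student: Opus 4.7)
The plan is to combine the preceding theorem with flat base change for completion over a Noetherian ring, and then verify that the relevant ideals have the same radical.

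By the theorem immediately above, it suffices to show
\[
\Ext_{(A[\Delta^{-1}]_I^\wedge, \Gamma[\Delta^{-1}]_I^\wedge)}(A[\Delta^{-1}]_I^\wedge, A[\Delta^{-1}]_I^\wedge) \cong \Ext_{(A,\Gamma)}(A,A)[\Delta^{-1}]_{\mathfrak m}^\wedge.
\]
Since $A$ is Noetherian, the $I$-adic completion map $A[\Delta^{-1}] \to A[\Delta^{-1}]_I^\wedge$ is flat. The cobar complex computing the left-hand side is obtained from the cobar complex for $(A[\Delta^{-1}], \Gamma[\Delta^{-1}])$ by tensoring with $A[\Delta^{-1}]_I^\wedge$ over $A[\Delta^{-1}]$; flatness ensures that cohomology commutes with this base change. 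Combined with the preceding theorem, this identifies the left-hand side with $\Ext_{(A,\Gamma)}(A,A)[\Delta^{-1}] \otimes_{A[\Delta^{-1}]} A[\Delta^{-1}]_I^\wedge$.

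The remaining step is to identify this tensor product with the $\mathfrak m$-adic completion of the Ext algebra. Here I would use that the inclusion $H^0(A,\Gamma)[\Delta^{-1}] \hookrightarrow A[\Delta^{-1}]$ is finite (the $a_i$ satisfy polynomial relations over the ring of invariants, integrally after inverting $\Delta$), so the two completions agree provided the contraction of $I$ to $H^0$ has the same radical as $\mathfrak m$. Using Table~\ref{Tab:GenRel} together with the description $I = (5, a_1, a_2, a_3, a_4+1, a_5)$, each invariant generator of $\mathfrak m$ visibly vanishes modulo $I$; conversely, once $\Delta$ is inverted, each $a_i$ appearing in $I$ is a unit multiple of a polynomial in the invariant generators of $\mathfrak m$. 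Since the two ideals have the same radical in the finitely generated $H^0$-module $\Ext_{(A,\Gamma)}(A,A)[\Delta^{-1}]$, their completions coincide.

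The main obstacle is this final radical equality. The forward inclusion $\mathfrak m \subseteq I \cap H^0[\Delta^{-1}]$ is immediate from Table~\ref{Tab:GenRel} and the fact that $\Delta \equiv -1 \pmod I$, but the reverse inclusion requires the $5$-adic depth argument of Section~\ref{sec:relations} to systematically rewrite each element of $I$ in terms of the invariant generators $c_2, c_3, \Delta_i, \Delta_j'$ modulo units in $\Delta$. Granting this bookkeeping, the theorem follows by stringing together flat base change and the radical-completion identification.
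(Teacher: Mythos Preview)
Your approach is genuinely different from the paper's and, as written, has a real gap. The paper does not argue algebraically at all: it passes through the homotopy theory, using a Behrens--Lawson type argument to identify $L_{K(n)}eo_{p-1}[\Delta^{-1}]$ with $EO_{p-1}$, then invoking Hovey--Strickland to interpret $K(n)$-localization as completion at $(p,v_1,\dots,v_{n-1})$, and finally quoting Hopkins--Gorbounov--Mahowald to match that completion on the $E_2$-page with $\Ext$ over the $I$-completed Hopf algebroid. The only ``radical'' comparison the paper needs is between the ideals $(5,v_1,v_2,v_3)$ and $\mathfrak m$, both of which live in $H^0$; it never has to compare an ideal of $A$ with one of $H^0$.

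Your argument, by contrast, hinges on the assertion that $H^0(A,\Gamma)[\Delta^{-1}]\hookrightarrow A[\Delta^{-1}]$ is a finite extension, and this is false. Rationally $H^0\otimes\Q=\Q[c_2,\dots,c_p]$ while $A\otimes\Q=\Q[a_1,\dots,a_p]$, and the change of variables in the rational computation shows $A\otimes\Q\cong (H^0\otimes\Q)[a_1]$ is a polynomial ring in one more variable; inverting $\Delta$ does not change this. So the contraction/radical step cannot go through as stated. There is a related problem one step earlier: since $\Gamma=A[r]$ is not finitely generated over $A$, the $I$-adic completion of the cobar terms is not the same as tensoring with $A[\Delta^{-1}]_I^\wedge$, so ``flat base change'' does not literally produce the cobar complex of the completed Hopf algebroid.

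Both issues disappear if you first pass to the equivalent reduced Hopf algebroid $(\bar A,\bar\Gamma)$ of \S\ref{sec:AllPrimes}: there $\bar\Gamma$ is finite free of rank $p$ over $\bar A$, so completion and tensor agree on every cobar term, and $\bar A$ is (at least after inverting $\Delta$) finite over $H^0$, so the radical comparison of $I\cap H^0$ with $\mathfrak m$ becomes meaningful. If you rewrite the argument over $(\bar A,\bar\Gamma)$ and justify that finiteness, you would obtain a purely algebraic proof that is independent of the existence of the spectrum $eo_4$---something the paper's topological argument does not give.
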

\begin{proof}
An argument essentially identical to the Behrens-Lawson argument identifying the $K(n)$-localization of $TAF$ shows that the spectrum $eo_{p-1}[\Delta^{-1}]$ is $E(n)$-local and that $L_{K(n)}eo_{p-1}[\Delta^{-1}]=EO_{p-1}$. A result of Hovey and Strickland shows that this $K(n)$-localization is completion with respect to $p$ and $v_i$ for $i<n$ \cite{HoveyStricklandK(n)}.

For $EO_{4}$, Hopkins, Gorbounov, and Mahowald identify this completion in the Adams-Novikov $E_2$-term with $\Ext$ over the $I$-adic completion of $(A[\Delta^{-1}],\Gamma[\Delta^{-1}])$. This also allows us to see that in the Adams-Novikov $E_2$-term for $eo_4[\Delta^{-1}]$, completion with respect to $5$, $v_1$, $v_2$, and $v_3$ coincides with $\mathfrak m$-adic completion, which gives the desired result.
\end{proof}

\bibliography{biblio}

\end{document}